\theoremstyle{definition}
\newtheorem{remark}{Remark}
\newtheorem*{claim}{Claim}
\theoremstyle{plain}
\newtheorem{theorem}{Theorem}
\newtheorem{lemma}[theorem]{Lemma}
\newtheorem*{theorem*}{Theorem}
\newtheorem*{conjecture}{Conjecture}
\begin{document}
\title[growth of the number of periodic orbits]{The growth of the number of periodic orbits for annulus homeomorphisms and non-contractible closed geodesics on Riemannian or Finsler $\mathbb{R}P^2$}

%    Information for first author
\author{Hui Liu}
%    Address of record for the research reported here
\address{School of Mathematics and Statistics, Wuhan University,
Wuhan 430072, Hubei, P. R. China}
\email{huiliu00031514@whu.edu.cn}
%    \thanks will become a 1st page footnote.
%\thanks{The first author is supported by NSFC (Nos. 11401555, 11371339), Anhui Provincial Natural Science Foundation
% (No. 1608085QA01); The second author is supported by China Postdoctoral Science Foundation (No.
% 2013T60251), International Postdoctoral Exchange Fellowship Program (No. 20130045), NFSC (No. 11401320).}

%    Information for second author
\author{Jian Wang}
\address{School of Mathematical Sciences and LPMC, Nankai University, Tianjin 300071, P. R. China}
\email{wangjian@nankai.edu.cn}

\author{Jingzhi Yan}
\address{College of Mathematics, Sichuan University, Chengdu 610065, P.R. China }
\email{jyan@scu.edu.cn}

 %  General info
\subjclass[2000]{37E45, 37E30}
\date{}

\begin{abstract}
In this article, we give a growth rate about the number of periodic orbits in the Franks type theorem obtained by the authors \cite{LWY}. As applications, we prove the following two results: there exist infinitely many distinct non-contractible closed geodesics on $\mathbb{R}P^2$ endowed with a Riemannian metric such that its Gaussian curvature is positive, moreover, the number of non-contractible closed geodesics of length $\leq l$ grows at least like $l^2$; and there exist either two or infinitely many distinct non-contractible closed geodesics on Finsler $\mathbb{R}P^2$ with reversibility $\lambda$ and flag curvature $K$ satisfying $\left(\frac{\lambda}{1+\lambda}\right)^2<K\le 1$, furthermore, if the second case happens, then the number of non-contractible closed geodesics of length $\leq l$ grows at least like $l^2$.
\end{abstract}

\maketitle

\section{introduction}

The research of the periodic orbits of annulus homeomorphisms was started by Poincar\'{e}. In his search for periodic solutions in the restricted three body problem of celestial mechanics, Poincar\'{e} constructed an area-preserving section map of an annulus $\mathbb{A}\backsimeq\mathbb{R}/\mathbb{Z}\times [0,1]$ on the energy surface, and get the periodic orbits of the original system from the periodic orbits of the annulus homeomorphism (see \cite{Poincare}). Later, many mathematicians studied annulus homeomorphisms (see \cite{Birkhoff13, Birkhoff15, Birkhoff26, Franks92, Lecalvez01, Lecalvez05} etc.)

Poincar\'{e}'s idea inspired many mathematicians. A classic method for the existence of periodic motions of Hamiltonian system with two degrees of freedom is as follows: we first reduce the dynamics to the annulus-type global surface of section and then get the existence of two or infinitely many periodic orbits by the following theorem due to Franks; see the pioneer works of Hofer, Wysocki and Zehnder \cite{HWZ98, HWZ03} and also some important progress~\cite{CGHP, CDR22} on this topic.

\begin{theorem*}[Franks]\label{thm:Franks} Suppose that $f$ is an area preserving homeomorphism of the open or closed annulus which is isotopic to the identity. If $f$ has at least one fixed or periodic point, then $f$ must have infinitely many interior periodic points.
\end{theorem*}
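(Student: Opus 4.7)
My plan is to build the proof around rotation numbers for lifts to the universal cover $\tilde{\mathbb{A}}$ and, ultimately, around Franks' own realization theorem stating that if the rotation set of an area-preserving lift (isotopic to the identity) contains an interval, then every rational in lowest terms from that interval is realized as the rotation number of a periodic orbit. Given the existence of a single periodic point, the proof splits into two tasks: a reduction step and a non-triviality argument for the rotation set.

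First I would reduce to the case of a fixed point. If $p$ is a periodic point of period $q$, replace $f$ by $f^q$; any infinite family of periodic orbits for $f^q$ yields one for $f$. Passing to the universal cover, let $\tilde{f}$ be the lift of $f$ that fixes a chosen preimage $\tilde{p}$ of $p$ (this is possible because $f$ is isotopic to the identity, so its lifts differ by powers of the deck transformation $T(x,y)=(x+1,y)$). The rotation set $\rho(\tilde{f})$, defined as the collection of all limits $\lim_k \frac{\pi_1(\tilde{f}^{n_k}\tilde{x}_k)-\pi_1(\tilde{x}_k)}{n_k}$, contains $0$ because of the fixed point~$\tilde{p}$.

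The central step is to show that $\rho(\tilde{f})$ contains some number other than $0$; once this is done, by convexity-type properties of the rotation set for annulus homeomorphisms the set contains a non-degenerate interval around $0$ (or on one side of $0$), and Franks' realization theorem produces a periodic orbit of rotation number $p/q$ for each rational $p/q$ in lowest terms inside that interval, hence infinitely many distinct periodic orbits. To establish non-triviality I would argue by contradiction, assuming $\rho(\tilde{f})=\{0\}$: by general rotation-set dichotomies for annulus homeomorphisms (uniform boundedness of horizontal displacements along orbits), every orbit would then have bounded horizontal drift, and $\tilde{f}$ would commute in a controlled way with the deck transformation. Coupled with area preservation and Poincar\'e recurrence, one extracts via Atkinson's lemma points whose horizontal displacements return arbitrarily close to any value, which is incompatible with the bounded-displacement conclusion unless $f$ is rigidly conjugate to the identity on an essential subannulus, contradicting the hypothesis of a single isolated periodic orbit.

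The main obstacle is precisely the non-triviality of the rotation set. Without area preservation, a Dehn-twist-type perturbation of the identity can have a single periodic orbit and trivial rotation set, so area preservation must be invoked at a decisive point; the cleanest way I know is via Atkinson's lemma (the ergodic-theoretic fact that Birkhoff sums of a zero-mean integrable function return arbitrarily close to zero almost surely), which forces the rotation set to be symmetric about its mean and hence, combined with $0 \in \rho(\tilde{f})$, to strictly contain $0$ whenever $f$ is not the identity. Once this analytic input is in place, the rest of the argument is the purely topological machinery of Franks' realization theorem and the convex-like structure of rotation sets.
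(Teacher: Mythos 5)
Your overall architecture---reduce to a fixed point, lift so that $0$ lies in the rotation set, show the rotation set is nondegenerate, then invoke a Poincar\'e--Birkhoff realization theorem (Theorem \ref{thm: Franks and le calvez} in the paper)---is the right skeleton, and it is also essentially the skeleton the paper uses in proving the stronger Theorem \ref{thm: growth of periodic orbits}. The divergence, and the problem, is in the step where you argue the rotation set cannot reduce to $\{0\}$.

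The implication ``$\rho(\widetilde f)=\{0\}$ implies every orbit has bounded horizontal drift'' is false. A single-point Misiurewicz--Ziemian rotation set controls only the asymptotic \emph{averages} of displacement; partial sums can be unbounded (there are area-preserving ``irrotational'' examples, e.g.\ of Koropecki--Tal type, whose lift fixes $0$ in the rotation set yet has unbounded deviations). Since that premise fails, the subsequent clash you set up with Atkinson's lemma does not materialize. In any case, Atkinson's lemma only gives that the displacement cocycle returns arbitrarily close to \emph{zero} along a subsequence for almost every point; it does not produce returns ``arbitrarily close to any value,'' and recurrence of the cocycle near $0$ is entirely consistent with $\rho(\widetilde f)=\{0\}$, so no contradiction is extracted. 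Finally, the closing line ``contradicting the hypothesis of a single isolated periodic orbit'' misquotes the hypothesis: the theorem assumes \emph{at least one} periodic point, not exactly one, so there is no such hypothesis to contradict. Because of this, the proposal does not establish the central non-triviality of the rotation set.

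For contrast, the paper's proof of the stronger growth theorem (of which Franks' theorem is a corollary) gets non-triviality of the rotation set by a genuinely different mechanism: pass to a sphere compactification, take a maximal identity isotopy $I$ (via Theorem \ref{thm: BCLR}) and a Le Calvez transverse foliation $\mathcal{F}$, pick a leaf joining the ends, then use Poincar\'e recurrence and Kac's lemma (Lemma \ref{lemma: get nonzero rotation number}) to produce a positively recurrent point with nonzero rotation number; if that leaf's sub-annulus carries no fixed point of $I$, a prime-ends rotation-number argument (Theorem \ref{thm: prime end rotation number}) shows the boundary still has rotation number $0$, giving the needed spread of rotation numbers. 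That foliation-plus-prime-ends machinery is precisely what replaces the unsound ``bounded drift'' step in your outline; if you want to keep the ergodic flavor, the recurrent-return-map argument in Lemma \ref{lemma: get nonzero rotation number} (which does use Kac's lemma and Birkhoff's theorem) is the legitimate place where ergodic theory enters.
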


In \cite{Franks92}, Franks used this theorem to prove that a Riemannian metric on $\mathbb{S}^2$ has infinitely many closed geodesics whenever there is a simple closed geodesic for which  Birkhoff's global annulus like surface of section is well defined. This, together with Bangert's result \cite{Bangert1993}, implies the existence of infinitely many closed geodesics on every Riemannian $\mathbb{S}^2$.

In our last paper \cite{LWY}, we improved Franks' theorem by considering the existence of the periodic orbits with periods relatively prime to a given number $n_0$, and as its application we gave some precise information about the symmetries of periodic orbits found in Hofer, Wysocki and Zehnder's dichotomy theorem when the tight 3-sphere is equipped with some additional symmetries; see also \cite{FK, Sch20, Kim} for other dichotomy results about symmetric periodic orbits.

For the growth of the number of periodic orbits, Neumann \cite{Neumann77}  considered  area preserving twist homeomorphism of the closed annulus. In this article,  we will  estimate  the growth of the number of periodic orbits  and the growth of the number of periodic orbits whose periods are prime to a given prime number for area preserving homeomorphisms of the open and closed annuli,  which  will be applied to the problem of non-contractible closed geodesics on Finsler or Riemannian $\mathbb{R}P^2$.

\subsection{The growth of the number of periodic orbits for annulus homeomorphisms}

We consider a homeomorphism $f$ of the open (resp. closed) annulus $\mathbb{A}=\mathbb{R}/\mathbb{Z}\times (0,1)$ (resp. $\mathbb{A}=\mathbb{R}/\mathbb{Z}\times [0,1]$) that is isotopic to the identity.
An \emph{area} of a surface  is  a locally finite Borel  measure without atom and with total support.
We say that $z$ is an {\it $n$ prime-periodic point} of $f$ if $z$ is an $n$ periodic  point but not an $l$ periodic point of $f$ for all $0<l<n$. We call a periodic orbit of $f$ {\it an interior} periodic orbit, if it is in the interior of $\mathbb{A}$.
We denote by $\mathrm{Fix}(f)$ the set of fixed points of $f$.
Let
\[N_{=n}=\sharp\{\text{interior $n$ prime-periodic orbits of } f \},\]
\[N_{\le n}=\sharp\{\text{interior periodic orbits of $f$ with prime-period}\le n \},\]
\[N_{\le n, n_0\nmid}=\sharp\{\text{interior } q \text{ prime-periodic orbits of }  f:  q\le n, (q, n_0)=1 \}.\]
We have the following theorem about the growth  of the  number of the  periodic orbits of $f$.

\begin{theorem}\label{thm: growth of periodic orbits}
  Let $f$ be a homeomorphism of the closed or open annulus, that is isotopic to the identity and preserves a finite area. If $f$ has a fixed or  periodic point, then
   \[\liminf_{n\to+\infty} N_{\le n} \frac{1}{n^2}>0.\]
   Moreover, for  a given prime number $n_0$, if $f$ has a fixed or periodic point with prime-period $k$ such that $(k, n_0)=1$, then
   \[\liminf_{n\to+\infty} N_{\le n, n_0\nmid}  \frac{1}{n^2}>0.\]
    More precisely, if  $f$ has  a $k$ prime-periodic point  and  $f^k\not= \mathrm{Id}$, then
  \[\liminf_{n\to +\infty} N_{=kn}\frac{\log\log n}{n}>0.\]
\end{theorem}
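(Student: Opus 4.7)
The plan is to reduce the theorem to a rotation-interval analysis plus lattice-point counting. Lift $f$ to a homeomorphism $\tilde f$ of $\widetilde{\mathbb{A}}=\mathbb{R}\times(0,1)$ (or $\mathbb{R}\times[0,1]$) and write $\rho(\tilde f)\subset\mathbb{R}$ for its rotation set. After replacing $f$ by an iterate if necessary, I may assume $f$ has a fixed point $z$ and choose $\tilde f$ so that $0\in\rho(\tilde f)$.

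The first key step is to show that $\rho(\tilde f)$ is a \emph{non-degenerate} interval. If instead $\rho(\tilde f)=\{0\}$, then by combining Le Calvez's equivariant Brouwer foliation with the area-preservation hypothesis and the Franks-type theorem of~\cite{LWY}, $f$ would have to be the identity on an invariant open set of positive area; this contradicts the existence of an interior non-fixed periodic point obtained by iterating the argument on $f^k$. I expect this step to be the \emph{main obstacle}, as it requires a careful Le Calvez-style analysis under minimal hypotheses.

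Once a non-degenerate interval $[a,b]\subset\rho(\tilde f)$ is available, I invoke a Franks-type rotation-interval theorem: for every rational $p/q$ in lowest terms in $(a,b)$ there is an interior periodic point $z_{p/q}$ satisfying $\tilde f^q(\tilde z_{p/q})=\tilde z_{p/q}+(p,0)$, and because $p/q$ is in lowest terms this point has $f$-prime period exactly $q$. Elementary Farey counting then gives
\[
\#\bigl\{p/q\in(a,b):1\le q\le n,\;\gcd(p,q)=1\bigr\}\ \ge\ c(b-a)\,n^{2},
\]
yielding $N_{\le n}\ge c'n^2$. The coprime-$n_0$ refinement follows by restricting the count to denominators with $\gcd(q,n_0)=1$; since $n_0$ is prime this removes only a bounded fraction of denominators, so the quadratic bound survives. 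A short bookkeeping sublemma is needed to confirm that when passing between $f^k$ and $f$ (in the case $f$ has no fixed point), prime periods transform predictably and coprimality with $n_0$ is preserved.

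For the refined bound $N_{=kn}\ge c\,n/\log\log n$, assume $f^k\ne\mathrm{id}$ so that the previous analysis applies to $f^k$ with a non-degenerate rotation interval $[a,b]$ around $0$. For each large $n$, count the rationals $p/n$ with $\gcd(p,n)=1$ in $(a,b)$: there are at least $c(b-a)\varphi(n)$ of them, and each produces an $n$-prime-periodic orbit of $f^k$. The period-conversion argument then shows such an orbit corresponds to a $kn$-prime-periodic orbit of $f$ (any smaller $f$-period would, after the rotation-number bookkeeping, force the $f^k$-rotation $p/n$ to reduce, contradicting $\gcd(p,n)=1$). Applying the classical Rosser--Schoenfeld lower bound $\varphi(n)\ge c\,n/\log\log n$ completes the proof.
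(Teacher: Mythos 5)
Your overall strategy---establish a non-degenerate rotation interval, then count rationals via the Franks/Le~Calvez theorem and elementary estimates on $\varphi(n)$---is precisely the architecture of the paper's proof (Lemma~\ref{lemma: different rotation implies growth} together with the $\varphi$, $\Phi$, $\Psi$ asymptotics). The lattice-point counting, the coprime-$n_0$ variant, and the use of $\liminf \varphi(n)\log\log n/n = e^{-\gamma}$ all match the paper. Where your proposal goes wrong is precisely at the step you flag as the main obstacle, and the gap is real: the claim that $\rho(\tilde f)=\{0\}$ together with area-preservation and the theorem of \cite{LWY} forces $f$ to be the identity on a positive-measure invariant open set is false. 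There are many area-preserving annulus homeomorphisms, isotopic to the identity with a fixed point, whose (point-wise) rotation set is $\{0\}$ and which are nowhere the identity; and having an interior non-fixed periodic point of prime period $q>1$ does \emph{not} force a nonzero rotation number, since the rotation number of a $q$ prime-periodic point can be $p'/q'$ with $q'$ a proper divisor of $q$, in particular $0$. So the contradiction you propose does not materialize.

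The paper's argument for producing two distinct rotation numbers is considerably more delicate. Assuming all recurrent points have rotation number $0$, it compactifies to a sphere with two marked fixed points $N,S$, takes a maximal isotopy $I$ and a transverse foliation $\mathcal{F}$ (via Le~Calvez and B\'eguin--Crovisier--Le~Roux), and argues as follows. If $\mathrm{Fix}(I)=\{N,S\}$, then every leaf joins the two ends of the annulus and a Kac's-lemma/Birkhoff-ergodic argument (Lemma~\ref{lemma: get nonzero rotation number}) produces a positive-measure set of recurrent points with nonzero rotation number, a contradiction. If $\mathrm{Fix}(I)$ has at least three points, one picks a leaf $\Gamma$, fills in the component $U$ of its complement to obtain an invariant open topological annulus $\mathbb{A}'$ with $\Gamma$ joining its two ends, and either finds a fixed point of $I$ inside $\mathbb{A}'$ (rotation number $0$) or uses the prime ends rotation number theorem of Koropecki--Le~Calvez--Nassiri together with continuity in $t$ along the isotopy to show the prime-end rotation number on the boundary is $0$; in either case Lemma~\ref{lemma: get nonzero rotation number} again gives a contradiction. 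None of this is captured by your sketch.

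A secondary issue: your period-conversion step ("any smaller $f$-period would force the $f^k$-rotation $p/n$ to reduce") is not correct as stated and in any case not what the paper does. The paper's Lemma~\ref{lemma: period of fk} is purely arithmetic and crucially uses the hypothesis that $k$ is the \emph{greatest common divisor of all prime periods of $f$}, so that $k$ divides the $f$-prime period of every periodic point; without that hypothesis the claim that an $n$ prime-periodic point of $f^k$ has $f$-prime period exactly $kn$ can fail. Rotation-number bookkeeping alone does not rescue it, because $\rho(\tilde f^k, z)=k\rho(\tilde f, z)$ and $p/n$ being reduced only gives $n\mid q'$ for the reduced denominator $q'$ of $\rho(\tilde f,z)$, which does not pin down the prime period. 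You would need to reorganize the proof around the gcd of the periods, exactly as the paper does in its case~(iii).
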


As a corollary of the second part of the theorem, if $f$ has an odd periodic orbit, it has infinite many interior odd periodic orbits, and the number of interior odd periodic orbits with prime-periods not exceeding $n$ grows at least like $n^2$. We also note that the third part of the theorem  is a direct corollary of  Theorem \ref{thm: growth of n periodic} in Section 3, which is more precise but  technical.

\begin{remark}
 Neumann \cite{Neumann77} gave an example of an area preserving  homeomorphism of the closed annulus,  such that $ N_{=n} \le (2+\varepsilon)\varphi(n; a, b)$, where $\varepsilon$ is small,  $a, b$ are the different rotation numbers of the points on the two boundaries, and  $\varphi(n; a, b)$ is the number of irreducible  fractions with denominator $n$ in the interval $(a, b)$. By choosing $a<0<b$, we have an area preserving  homeomorphism of the closed annulus that has a fixed point and satisfies
 \begin{align*}
 &\limsup_{n\to+\infty} N_{\le n} \frac{1}{n^2}<+\infty,\\
  &\limsup_{n\to+\infty} N_{\le n, n_0\nmid}  \frac{1}{n^2}<+\infty,\\
  &\liminf\limits_{n\to +\infty} N_{=n}\frac{\log\log n}{n}<+\infty.
 \end{align*}
\end{remark}

\begin{remark}
  If we add the condition that $f$ is reversible and consider the number of  symmetric periodic orbits
(see  \cite{Kang} or \cite{LWY} for the precise definitions), we  have similar results.  The idea of the proof is similar, but we should add the condition that $\sharp\mathrm{Fix}(f^k)<+\infty$ when we consider the number of the $kn$ prime-periodic symmetric orbits.
\end{remark}

\subsection[The growth of the number of non-contractible closed geodesics on Riemannian or Finsler $\mathbb{R}P^2$]{\texorpdfstring{The growth of the number of non-contractible closed geodesics on Riemannian or Finsler $\mathbb{R}P^2$}{The growth of the number of non-contractible closed geodesics on Riemannian or Finsler RP2}}

Hingston \cite{Hingston1993} proved that the number of closed geodesics of length $\leq l$ on Riemannian $\mathbb{S}^2$ grows at least like the prime numbers. Similarly, we  consider the growth of the number of non-contractible closed geodesics on
Riemannian $\mathbb{R}P^2$ whose double cover is $\mathbb{S}^2$. As applications of our Theorem \ref{thm: growth of periodic orbits}, we obtain
the following:

\begin{theorem}\label{thm: application 1}
Let $(\mathbb{R}P^2, g)$ be a Riemannian  real projective plane whose Gaussian curvature is positive.  Then there exist infinitely many distinct non-contractible closed geodesics on $(\mathbb{R}P^2,g)$. Moreover, the number of non-contractible closed geodesics of length $\leq l$ grows at least like $l^2$.
\end{theorem}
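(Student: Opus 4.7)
Take the length-minimizing non-contractible closed geodesic $\bar\gamma$ on $(\mathbb{R}P^2,g)$, which exists by the direct method in the non-trivial free-homotopy class. Using positive Gaussian curvature---conveniently by lifting to the orientation double cover $(\mathbb{S}^2,\tilde g)$, where the $A$-invariant lift $\gamma$ of $\bar\gamma$ (of length $2L$) admits Birkhoff's classical argument, and then descending by the antipodal involution $A$---every geodesic transverse to $\bar\gamma$ returns to $\bar\gamma$ in uniformly bounded time. This yields a closed-annulus global section $\bar\Sigma\subset T^1\mathbb{R}P^2$ whose interior consists of the unit vectors on $\bar\gamma$ that are not tangent to $\bar\gamma$, and whose two boundary circles are the two oriented lifts of $\bar\gamma$ to orbits in $T^1\mathbb{R}P^2$. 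The first-return map $\bar f:\bar\Sigma\to\bar\Sigma$ is a homeomorphism, is isotopic to the identity via the partial-flow isotopy $\bar f_t(x)=\phi_{t\tau(x)}(x)$, preserves the finite area induced by the Liouville form, and restricts to the identity on each boundary circle.

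\textbf{Correspondence and application.} Any primitive closed geodesic $\bar\alpha\ne\bar\gamma$ meets $\bar\gamma$ transversally in some $k\ge 1$ points (a tangency at a point would force $\bar\alpha=\bar\gamma$), and this sets up a bijection between primitive closed geodesics meeting $\bar\gamma$ in $k$ points and interior prime $k$-periodic orbits of $\bar f$. Since $H_1(\mathbb{R}P^2;\mathbb{Z}/2)=\mathbb{Z}/2$ and the generator has self-intersection $1$, the mod-$2$ intersection number $[\bar\alpha]\cdot[\bar\gamma]$ equals $1$ when $\bar\alpha$ is non-contractible and $0$ when $\bar\alpha$ is contractible; by transversality this equals the parity of $k$. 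Hence non-contractible closed geodesics correspond exactly to interior prime-periodic orbits of $\bar f$ of odd prime-period. Every point of $\partial\bar\Sigma$ is a fixed point of $\bar f$ with prime-period $1$, coprime to $n_0=2$, so the second part of Theorem~\ref{thm: growth of periodic orbits} applies and yields
\[
\liminf_{n\to\infty}\frac{N_{\le n,\,2\nmid}}{n^2}>0.
\]
Compactness of $T^1\mathbb{R}P^2$ together with the global-section property bounds the return time of $\bar f$ above by some $T_{\max}<\infty$, so a closed geodesic with $k$ crossings of $\bar\gamma$ has length at most $kT_{\max}$; taking $k=\lfloor l/T_{\max}\rfloor$ yields at least $\gtrsim l^2$ non-contractible closed geodesics of length $\le l$, and in particular produces infinitely many of them.

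\textbf{Main obstacle.} The principal technical step is the construction of the closed-annulus Birkhoff section for $\bar\gamma$ from positive curvature alone; the cleanest route passes to the double cover $\mathbb{S}^2$, where the lifted simple geodesic $\gamma$ (not necessarily shortest on $\mathbb{S}^2$) admits Birkhoff's disk argument, and then descends by $A$. This in particular requires verifying that the return time of $\bar f$ extends continuously to the boundary as the period $L$ of $\bar\gamma$, so that $\bar f$ extends by the identity to $\partial\bar\Sigma$. A secondary delicate point is that the fixed points of $\bar f$ used to trigger Theorem~\ref{thm: growth of periodic orbits}(2) lie on $\partial\bar\Sigma$, but the closed-annulus formulation of that theorem directly permits this. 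Once $\bar\Sigma$ and $\bar f$ are in hand, the intersection-parity argument and the length-vs-period estimate are routine.
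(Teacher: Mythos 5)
Your proposal is correct and follows the same overall strategy as the paper: build a closed-annulus Birkhoff section from the minimizing non-contractible geodesic, observe the induced return map preserves area, is isotopic to the identity, and fixes the boundary, then invoke the $n_0=2$ part of Theorem~\ref{thm: growth of periodic orbits}. The one place where your route genuinely diverges is the correspondence between odd-period orbits and non-contractible geodesics: you argue via the mod-$2$ intersection pairing on $H_1(\mathbb{R}P^2;\mathbb{Z}/2)$ (parity of crossings of $\bar\gamma$ $=$ intersection class), whereas the paper works on $T^1\mathbb{S}^2$ with the half-return map $\psi$, sets $f=h_*^{-1}\circ\psi$, and notes that $f^{2k+1}(x)=x$ forces $\psi^{2k+1}(x)=h_*(x)$, so the orbit projects to a loop representing the nontrivial deck transformation. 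Both are valid; yours is more topological, the paper's is more dynamical, and they are conjugate under the double cover $\Sigma\to\bar\Sigma$. One subtlety worth surfacing in your framing: the circle bundle over $\bar\gamma$ inside $T^1\mathbb{R}P^2$ is a Klein bottle (since $\bar\gamma$ is orientation-reversing), so the closure there of the non-tangent vectors is not naturally a closed annulus; the clean way to obtain $\bar\Sigma$ as a closed annulus with the first-return map extending to the identity on the boundary is exactly the $\mathbb{S}^2$ lift and the conjugate-point analysis you allude to in your ``Main obstacle'' paragraph, which is precisely what the paper carries out (including the claim that the first conjugate point of $\varsigma(s_0)$ along $\varsigma$ is $h(\varsigma(s_0))$). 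Also, the bijection in your second paragraph should exclude $\bar\gamma^{-1}$ as well as $\bar\gamma$, since its flow orbit is the other boundary circle. These are presentational points, not gaps: once $\bar\Sigma$ is constructed via $\mathbb{S}^2$, the rest of your argument, including the length estimate using only the upper return-time bound, is sound.
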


For Finsler $\mathbb{R}P^2$ under suitable conditions,  a dichotomy result on the existence of non-contractible closed geodesics holds. Moreover, if there exist infinitely many distinct non-contractible closed geodesics, then the growth rate as in Theorem \ref{thm: application 1} holds:

\begin{theorem}\label{thm: application 2}
Let $(\mathbb{R}P^2,F)$ be an irreversible Finsler real projective plane with reversibility $\lambda$ and flag curvature $K$ satisfying $\left(\frac{\lambda}{1+\lambda}\right)^2<K\le 1$. Then there exist either two or infinitely many distinct non-contractible closed geodesics on $(\mathbb{R}P^2,F)$; furthermore, if the second case happens, then the number of non-contractible closed geodesics of length $\leq l$ grows at least like $l^2$.
\end{theorem}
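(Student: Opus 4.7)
The plan is to build a Birkhoff-type annular global surface of section for the geodesic flow on $\mathbb{R}P^2$ and apply Theorem~\ref{thm: growth of periodic orbits} to the first return map. The dichotomy between exactly two and infinitely many non-contractible closed geodesics will then emerge together with the growth estimate: either only two exist and we are done, or a third one produces a non-trivial periodic point of the return map, which triggers Theorem~\ref{thm: growth of periodic orbits} and gives both infinitude and the $l^2$ bound.

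First I would pass to the orientation double cover $(\mathbb{S}^2,\widetilde F)$, on which $\widetilde F$ is an irreversible Finsler metric with the same reversibility $\lambda$ and the same flag curvature bounds. The pinching $\left(\frac{\lambda}{1+\lambda}\right)^2<K\le 1$ is the standard Finsler analogue of Klingenberg's quarter-pinching condition; together with known existence results on Finsler $\mathbb{R}P^2$ it furnishes at least two distinct non-contractible closed geodesics, and it guarantees that the shortest such geodesic $c_0$ lifts to a simple closed curve $\widetilde c_0\subset\mathbb{S}^2$ invariant under the antipodal map $\tau$ and covering $c_0$ twice. Under the pinching one also controls the injectivity radius and the Morse index of $c_0$ well enough to run a Birkhoff-type construction.

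Next I would use $\widetilde c_0$ as a Birkhoff axis on $(\mathbb{S}^2,\widetilde F)$ and adapt the classical Birkhoff construction to the irreversible Finsler setting to produce an annular global surface of section for the geodesic flow on $S\mathbb{S}^2$. Because $\widetilde c_0$ and $\widetilde F$ are $\tau$-invariant, this section and its return map descend to an annular global section on $S\mathbb{R}P^2$ whose return map $f$ is an area-preserving homeomorphism of the open annulus isotopic to the identity. Non-contractible closed geodesics on $(\mathbb{R}P^2,F)$ distinct from $c_0$ correspond to interior periodic orbits of $f$, and there is a uniform constant $T>0$ such that a prime-period-$n$ orbit of $f$ yields a closed geodesic of length at most $nT$.

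Finally, if a third non-contractible closed geodesic exists beyond $c_0$ and the second one supplied by the pinching, then $f$ has an interior periodic point; Theorem~\ref{thm: growth of periodic orbits} then gives $\liminf_{n\to\infty} N_{\le n}/n^2>0$, and the return-time bound translates this into at least quadratic growth in length. Thus we obtain the dichotomy \emph{two or infinitely many}, and in the second alternative the length growth $\sim l^2$. The main obstacle I anticipate is the construction and descent of the Birkhoff annular global section in the irreversible Finsler setting under exactly the stated pinching, together with verifying that the correspondence between interior periodic orbits of $f$ and non-contractible closed geodesics on $(\mathbb{R}P^2,F)$ is injective enough to transport the quantitative bound of Theorem~\ref{thm: growth of periodic orbits}; this requires careful bookkeeping of iterates, multiplicities, and the $\tau$-action on the lifted section.
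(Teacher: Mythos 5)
Your strategy---build an annular Birkhoff global surface of section directly for the Finsler geodesic flow, descend it to $S\mathbb{R}P^2$, and feed the return map into Theorem~\ref{thm: growth of periodic orbits}---is genuinely different from the paper's, and it has a gap at exactly the place you flag. The paper does \emph{not} attempt a Birkhoff annular section in the irreversible Finsler setting. Instead, it lifts the unit tangent bundle $S_F\mathbb{S}^2\cong SO(3)$ to $\mathbb{S}^3$ (using the double cover $SU(2)\to SO(3)$, Lemma~\ref{lemma: cover SO(3)}), observes that the antipodal deck transformation lifts to a free $\mathbb{Z}_4$-action $\widetilde h_*(z_1,z_2)=(iz_1,iz_2)$, and transports the contact form so that the geodesic flow becomes an $\widetilde h_*$-invariant Reeb flow on $\mathbb{S}^3$. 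The pinching $\left(\frac{\lambda}{1+\lambda}\right)^2<K\le 1$ is used solely through the Harris--Paternain result to deduce that this contact form is \emph{dynamically convex}; the annulus on which one applies Theorem~\ref{thm: growth of periodic orbits} comes from the disk-like global surface of section of Hofer--Wysocki--Zehnder in the dynamically convex case (this is the content of Lemma~\ref{lm: 1}, i.e.\ \cite[Theorem 11]{LWY}), after composing a Poincar\'e $\frac14$-return map with $\widetilde h_*^{-1}$. There is, to my knowledge, no Birkhoff-style annular section available for an irreversible Finsler $2$-sphere under the stated pinching alone, so the step you describe as the ``main obstacle'' is not a gap to be filled by careful bookkeeping --- it is the reason the paper changes venue to $\mathbb{S}^3$ entirely.

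Two further points. First, the statement ``non-contractible closed geodesics on $(\mathbb{R}P^2,F)$ distinct from $c_0$ correspond to interior periodic orbits of $f$'' is not correct as written: for the map $f=\widetilde h_*^{-1}\circ\psi$ used in the paper (and for the analogous $f=h_*^{-1}\circ\psi$ in the Riemannian Theorem~\ref{thm: application 1}), it is precisely the \emph{odd} prime-periodic orbits of $f$ that correspond to non-contractible closed geodesics, while even-period orbits correspond to contractible ones. That is why the growth estimate you need is not the plain $N_{\le n}$ bound but the $N_{\le n,\,n_0\nmid}$ bound of Theorem~\ref{thm: growth of periodic orbits} with $n_0=2$; this is the central reason that theorem is stated in that refined form. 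Second, the ``at least two'' half of the dichotomy is not something you get for free from ``known existence results'' in this setting: the paper derives the two-or-infinitely-many alternative directly from Lemma~\ref{lm: 1} applied to the $\mathbb{Z}_4$-symmetric dynamically convex form on $\mathbb{S}^3$, and Theorem~\ref{thm: growth of periodic orbits} is used afterwards only to upgrade ``infinitely many'' to the $l^2$ growth. Your outline inverts this logic and leaves the lower bound of two geodesics unjustified within its own framework.
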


\subsection{Organization of the paper}
In Section 2, we will give several definitions and preliminary results. In Section 3, we will prove  Theorem \ref{thm: growth of periodic orbits} and Theorem \ref{thm: growth of n periodic}.
In Section 4, we review the problem about the multiplicity of non-contractible closed geodesics on
$\mathbb{R}P^2$ endowed with Riemannian or Finsler metrics and then apply
Theorem \ref{thm: growth of periodic orbits} to prove Theorem~\ref{thm: application 1} and Theorem~\ref{thm: application 2}.

\subsection{Acknowledgements.}
Hui Liu was partially supported by NSFC (Nos. 12022111, 11771341).
Jian Wang was partially supported by NSFC (Nos. 12071231, 11971246) and the Fundamental Research Funds for the Central University. Jingzhi Yan was partially supported by NSFC(Nos. 11901409, 11831012).

%I also thank the referees for their careful reading of the text and for their many
%helpful remarks.\smallskip

\section{Preliminaries}

\subsection{Euler's phi function and the number of irreducible fractions}

Let $\varphi(n)$ be the Euler's phi function, i.e., $\varphi(n)$ is equal to the number of positive integers not exceeding $n$ and relatively prime to $n$. We have the following classic results.

\begin{theorem}\cite[Section 5.5, 18.4, 18.5]{HW}\label{thm: the order of euler's phi function}
\begin{itemize}
\item[i)] If $\gcd(m, n)=1$, then $\varphi(mn)=\varphi(m)\varphi(n)$.

\item[ii)] Let $n=p_1^{r_1}\cdots p_s^{r_s}$ be the prime decomposition of n. Then,
\[\varphi(n)=n\prod_{i=1}^{s}(1-\frac{1}{p_i})=\prod_{i=1}^{s}(p_i^{r_i}-p_i^{r_i-1}).\]

\item[iii)]
\[\liminf_{n\to +\infty}\varphi(n)\frac{\log\log n}{n}=e^{-\gamma},\]
where $\gamma=\lim\limits_{n\to +\infty}(1+\frac{1}{2}+\frac{1}{3}+\cdots+\frac{1}{n}-\log n)$ is the Euler's constant.

\item[iv)] $\Phi(n)\overset{\text{def}}{=}\varphi(1)+\cdots+\varphi(n)=\frac{3n^2}{\pi^2}+ O(n\log n)$,  as $n\to+\infty$.
\end{itemize}
\end{theorem}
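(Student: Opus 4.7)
The plan is to dispatch the four parts separately. Parts (i) and (ii) are essentially formal. For (i), the Chinese Remainder Theorem yields a ring isomorphism $\mathbb{Z}/mn\mathbb{Z}\cong\mathbb{Z}/m\mathbb{Z}\times\mathbb{Z}/n\mathbb{Z}$ when $\gcd(m,n)=1$, and this restricts to a bijection on the unit groups, so $\varphi(mn)=\varphi(m)\varphi(n)$. For (ii), a direct count shows that among $\{1,\dots,p^r\}$ the integers coprime to $p^r$ are precisely the non-multiples of $p$, giving $\varphi(p^r)=p^r-p^{r-1}$; combining with (i) over the prime factorization of $n$ produces the closed formula.

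For (iii), the idea is to test $\varphi(n)(\log\log n)/n$ along the primorials $N_k=p_1p_2\cdots p_k$ and then argue that the infimum is asymptotically realized there. By (ii), $\varphi(N_k)/N_k=\prod_{i\le k}(1-1/p_i)$, and Mertens' third theorem gives $\prod_{p\le x}(1-1/p)=e^{-\gamma}/\log x + o(1/\log x)$. Chebyshev's estimate (or the prime number theorem) yields $\log N_k=\sum_{i\le k}\log p_i\sim p_k$, hence $\log\log N_k\sim\log p_k$, and multiplying out gives $\varphi(N_k)(\log\log N_k)/N_k\to e^{-\gamma}$. For the matching lower bound, I observe that if $n$ has exactly $k$ distinct prime factors $q_1<q_2<\cdots<q_k$, then $q_i\ge p_i$ for each $i$, whence $n\ge N_k$ and $\varphi(n)/n=\prod_i(1-1/q_i)\ge\prod_i(1-1/p_i)=\varphi(N_k)/N_k$; therefore $\varphi(n)(\log\log n)/n\ge\varphi(N_k)(\log\log N_k)/N_k$, and taking the liminf over $k$ gives $e^{-\gamma}$. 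The main obstacle is quantifying Mertens' estimate precisely enough for the primorial calculation to produce the sharp constant $e^{-\gamma}$ rather than just an inequality of the correct order.

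For (iv), I would use $\varphi=\mu*\mathrm{id}$ and swap the order of summation:
\[
\Phi(n)=\sum_{k\le n}\sum_{d\mid k}\mu(d)\frac{k}{d}=\sum_{d\le n}\mu(d)\sum_{m\le n/d}m=\tfrac12\sum_{d\le n}\mu(d)\lfloor n/d\rfloor(\lfloor n/d\rfloor+1).
\]
Replacing $\lfloor n/d\rfloor$ by $n/d$ introduces an error of size $O(\sum_{d\le n}n/d)=O(n\log n)$, and what remains is $\tfrac{n^2}{2}\sum_{d\le n}\mu(d)/d^2$. Extending the tail to infinity costs only $O(n)$ since $|\sum_{d>n}\mu(d)/d^2|\le\sum_{d>n}1/d^2=O(1/n)$, and the resulting infinite series equals $1/\zeta(2)=6/\pi^2$. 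Assembling these pieces yields $\Phi(n)=3n^2/\pi^2+O(n\log n)$, as claimed.
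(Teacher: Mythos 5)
The paper does not prove this theorem; it cites all four parts to Hardy and Wright, and your proposal reconstructs precisely the standard arguments found there, so in effect you are following the same route.

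Parts (i), (ii), and (iv) are complete and correct as written. In (iii), the final step is stated a bit loosely: the inequality $\varphi(n)(\log\log n)/n \ge \varphi(N_k)(\log\log N_k)/N_k$ is valid when $n$ has exactly $k$ distinct prime factors, but $k=k(n)$ need not tend to infinity with $n$, so you cannot simply ``take the liminf over $k$.'' The standard repair: fix $\varepsilon>0$ and choose $K$ so that $\varphi(N_k)(\log\log N_k)/N_k > e^{-\gamma}-\varepsilon$ for every $k>K$. For $n$ with more than $K$ distinct prime factors your inequality gives the desired lower bound at once; for $n$ with at most $K$ distinct prime factors one has $\varphi(n)/n \ge \prod_{i\le K}(1-1/p_i)$, a positive constant, so $\varphi(n)(\log\log n)/n\to\infty$ along that family. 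In either case the quantity is eventually above $e^{-\gamma}-\varepsilon$, giving $\liminf\ge e^{-\gamma}$, and the primorial subsequence supplies the matching $\liminf\le e^{-\gamma}$. Finally, the ``main obstacle'' you flag is not actually an obstacle: $\prod_{p\le x}(1-1/p)=\frac{e^{-\gamma}}{\log x}(1+o(1))$ is exactly Mertens' third theorem (elementary, no prime number theorem needed), and since $\log\log N_k\sim\log p_k$ the $\log p_k$ factors cancel cleanly and the sharp constant drops out.
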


\bigskip

Now, let us consider the irreducible fractions between two real number  $\rho^-<\rho^+$.
Denote by $\varphi(n; \rho^-,\rho^+)$  the number of irreducible  fractions with the denominator $n$ between $\rho^-<\rho^+$, i.e.,
\begin{align*}
\varphi(n; \rho^-, \rho^+)&=\sharp\{m\in\mathbb{Z}:  \rho^-<\frac{m}{n}<\rho^+, \gcd(m,n)=1\}\\
&=\sharp\{m\in\mathbb{Z}:  m\in(n\rho^-, n\rho^+),   \gcd(m,n)=1\}.
\end{align*}
Let $\Phi(n; \rho^-,\rho^+)$ be the number of irreducible  fractions with denominators not exceeding $n$ between $\rho^-<\rho^+$, i.e.,
\begin{align*}
\Phi(n; \rho^-, \rho^+)&=\sharp\{\frac{p}{q}: \rho^-<\frac{p}{q}<\rho^+,  q\le n,   \gcd(p,q)=1\}\\
&=\sum_{q=1}^{n}\varphi(q;  \rho^-, \rho^+).
\end{align*}
We have  the following estimations of the orders of $\varphi(n; \rho^-,\rho^+)$  and $\Phi(n; \rho^-,\rho^+)$.

\begin{lemma}\cite{Neumann77}\label{Lemma: order of phi(n; rho-, rho+)}
  $\varphi(n; \rho^-,\rho^+)\sim (\rho^+-\rho^-)\varphi(n)$, as $n\to +\infty$.
\end{lemma}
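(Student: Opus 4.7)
The plan is to apply M\"obius inversion to the indicator function of coprimality, reduce the count to a sum of counts of multiples of a divisor of $n$ in the interval $(n\rho^-,n\rho^+)$, and then estimate the resulting error.

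First, I would write
\[
\varphi(n;\rho^-,\rho^+)=\sum_{\substack{m\in\mathbb{Z}\\ n\rho^-<m<n\rho^+}}\mathbf{1}_{\gcd(m,n)=1}
=\sum_{\substack{m\in\mathbb{Z}\\ n\rho^-<m<n\rho^+}}\sum_{d\mid \gcd(m,n)}\mu(d),
\]
and exchange the order of summation to obtain
\[
\varphi(n;\rho^-,\rho^+)=\sum_{d\mid n}\mu(d)\,\#\{m\in\mathbb{Z}:n\rho^-<m<n\rho^+,\ d\mid m\}.
\]
Since the multiples of $d$ in an open interval of length $n(\rho^+-\rho^-)$ number $n(\rho^+-\rho^-)/d+O(1)$, the inner count equals $n(\rho^+-\rho^-)/d+\theta_d$ with $|\theta_d|\le 1$.

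Substituting and using $\sum_{d\mid n}\mu(d)/d=\varphi(n)/n$ (which is part~ii) of Theorem \ref{thm: the order of euler's phi function} via the multiplicativity of $\mu(d)/d$) gives the main-term identity
\[
\varphi(n;\rho^-,\rho^+)=(\rho^+-\rho^-)\,\varphi(n)+R(n),\qquad |R(n)|\le\sum_{d\mid n}|\mu(d)|=2^{\omega(n)},
\]
where $\omega(n)$ is the number of distinct prime divisors of $n$. To conclude, I need $R(n)=o(\varphi(n))$.

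The expected main (and only) obstacle is comparing $2^{\omega(n)}$ with $\varphi(n)$. I would use part~iii) of Theorem \ref{thm: the order of euler's phi function}, which gives $\varphi(n)\ge c\,n/\log\log n$ for some $c>0$ and all sufficiently large~$n$, together with the standard bound $\omega(n)=O(\log n/\log\log n)$, which yields $2^{\omega(n)}\le \exp\!\bigl((\log 2+o(1))\log n/\log\log n\bigr)=n^{o(1)}$. Combining, $2^{\omega(n)}/\varphi(n)\to 0$, hence $R(n)/\varphi(n)\to 0$ and the asymptotic $\varphi(n;\rho^-,\rho^+)\sim(\rho^+-\rho^-)\varphi(n)$ follows.
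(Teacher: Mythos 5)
Your proof is correct and follows essentially the same approach as the paper (which reproduces Neumann's argument): both decompose $\varphi(n;\rho^-,\rho^+)$ by inclusion-exclusion over the divisors of $n$ --- you phrase it via M\"obius inversion while the paper writes out the alternating sum explicitly --- and both arrive at the error bound $2^{\omega(n)}$. To conclude $2^{\omega(n)}=o(\varphi(n))$, the paper tersely cites the product formula $\varphi(n)=\prod_i(p_i^{r_i}-p_i^{r_i-1})$, whereas you spell out a justification via $\omega(n)=O(\log n/\log\log n)$ together with part~iii) of Theorem \ref{thm: the order of euler's phi function}; both are sound, and yours is the more explicit.
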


\begin{lemma}\label{lemma: order of Phi(n; rho-, rho+)}
  $\Phi(n; \rho^-, \rho^+)\sim \frac{3(\rho^+-\rho^-)}{\pi^2} n^2$, as $n\to +\infty$.
\end{lemma}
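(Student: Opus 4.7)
The plan is to deduce this second-order asymptotic directly from the two ingredients already in hand, namely Lemma~\ref{Lemma: order of phi(n; rho-, rho+)} and part~(iv) of Theorem~\ref{thm: the order of euler's phi function}. By definition $\Phi(n;\rho^-,\rho^+)=\sum_{q=1}^{n}\varphi(q;\rho^-,\rho^+)$, so the whole matter reduces to promoting the termwise equivalence $\varphi(q;\rho^-,\rho^+)\sim(\rho^+-\rho^-)\varphi(q)$ to an equivalence of partial sums, and then invoking $\Phi(n)=\frac{3n^{2}}{\pi^{2}}+O(n\log n)$.

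First I would fix an arbitrary $\varepsilon>0$ and use Lemma~\ref{Lemma: order of phi(n; rho-, rho+)} to find $N=N(\varepsilon)$ such that for every $q\ge N$ one has
\[
(\rho^+-\rho^--\varepsilon)\,\varphi(q)\;\le\;\varphi(q;\rho^-,\rho^+)\;\le\;(\rho^+-\rho^-+\varepsilon)\,\varphi(q).
\]
Summing from $q=N$ to $q=n$, the contribution of the initial segment $q<N$ is bounded by a constant $C(\varepsilon)$, so
\[
(\rho^+-\rho^--\varepsilon)\,\Phi(n)-C(\varepsilon)\;\le\;\Phi(n;\rho^-,\rho^+)\;\le\;(\rho^+-\rho^-+\varepsilon)\,\Phi(n)+C(\varepsilon).
\]

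Next I would substitute $\Phi(n)=\frac{3n^{2}}{\pi^{2}}+O(n\log n)$, divide through by $n^{2}$, and let $n\to+\infty$ to obtain
\[
(\rho^+-\rho^--\varepsilon)\,\tfrac{3}{\pi^{2}}\;\le\;\liminf_{n\to+\infty}\tfrac{\Phi(n;\rho^-,\rho^+)}{n^{2}}\;\le\;\limsup_{n\to+\infty}\tfrac{\Phi(n;\rho^-,\rho^+)}{n^{2}}\;\le\;(\rho^+-\rho^-+\varepsilon)\,\tfrac{3}{\pi^{2}}.
\]
Since $\varepsilon>0$ is arbitrary, both $\liminf$ and $\limsup$ equal $\frac{3(\rho^+-\rho^-)}{\pi^{2}}$, which is exactly the desired asymptotic.

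There is no genuine obstacle here: the only thing that requires a moment of care is the standard Cesàro-type step of passing from $a_q\sim b_q$ to $\sum_{q\le n}a_q\sim\sum_{q\le n}b_q$, which is legitimate because the $\varphi(q)$ are nonnegative and their partial sums $\Phi(n)$ diverge (indeed grow like $n^{2}$), so any bounded error from the initial terms $q<N$ is swallowed after dividing by $n^{2}$.
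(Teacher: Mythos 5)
Your proof is correct and follows essentially the same route as the paper: both pass the termwise equivalence $\varphi(q;\rho^-,\rho^+)\sim(\rho^+-\rho^-)\varphi(q)$ (equivalently, the additive error $\varepsilon(q)$ with $\varepsilon(q)/\varphi(q)\to 0$) through the summation by a Ces\`aro-type argument that works because $\Phi(n)\to\infty$, and then invoke $\Phi(n)\sim\frac{3}{\pi^2}n^2$. The only difference is cosmetic: you spell out the $\varepsilon$--$N$ bookkeeping that the paper compresses into the sentence ``Note that $\varphi(n)$ are positive integers. Hence $\sum\varepsilon(q)/\sum\varphi(q)\to 0$.''
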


The following proof of Lemma \ref{Lemma: order of phi(n; rho-, rho+)} is from \cite{Neumann77}, and the proof of Lemma~\ref{lemma: order of Phi(n; rho-, rho+)} will use the result in the proof of Lemma~\ref{Lemma: order of phi(n; rho-, rho+)}.

\begin{proof}[Proof of Lemma \ref{Lemma: order of phi(n; rho-, rho+)}]
 Let $n=p_1^{r_1}\cdots p_s^{r_s}$ be the prime decomposition of $n$. Then, in the interval $(0,n]$, there are
 \begin{itemize}
   \item $n$  integers,
   \item  $\frac{n}{p_i}$  integers that are divisible by  $p_i$, for $i=1,\cdots, s$,
   \item $\frac{n}{p_i p_j}$  integers that are divisible by  $p_ip_j$, for $1\le i<j\le s$,
   \item $\ldots$
 \end{itemize}
 By the inclusion-exclusion principle,
 \[\varphi(n)=n-\sum_{i=1}^{s}\frac{n}{p_i}+\sum_{1\le i<j\le s}\frac{n}{p_i p_j}-\sum_{1\le i<j<k\le s}\frac{n}{p_i p_j p_k}+\cdots\]

For a given  positive integer $q$, in an open interval with length $\ell$,
 \begin{itemize}
 \item
 if $\frac{\ell}{q}$ is not an integer, there are   $\lfloor\frac{\ell}{q}\rfloor$ or $\lfloor\frac{\ell}{q}\rfloor+1$ integers that are divisible by $q$ in the interval;
 \item  if $\frac{\ell}{q}$ is  an integer, there are   $\frac{\ell}{q}$ or $\frac{\ell}{q}-1$ integers that are divisible by $q$ in the interval.
 \end{itemize}
So,
\begin{align*}
 &\varphi(n; \rho^-,\rho^+)\\
 =&[(n\rho^+-n\rho^-)-\sum_{i=1}^{s}\frac{n\rho^+-n\rho^-}{p_i}+\sum_{1\le i<j\le s}\frac{n\rho^+-n\rho^-}{p_i p_j}\\
 &-\sum_{1\le i<j<k\le s}\frac{n\rho^+-n\rho^-}{p_i p_j p_k}+\cdots]+\varepsilon(n)\\
 =&(\rho^+-\rho^-)\varphi(n)+\varepsilon(n),
\end{align*}
where $\varepsilon(n)$ is the error term, and
 \[|\varepsilon(n)|\le 1+\sum_{i=1}^{s}1+\sum_{1\le i<j\le s}1+\cdots =2^s. \]
Recall that $\varphi(n)=\prod_{i=1}^{s}(p_i^{r_i}-p_i^{r_i-1})$. So, $\frac{\varepsilon(n)}{\varphi(n)}\to 0$, as $n\to +\infty$.

Therefore, $\varphi(n; \rho^-,\rho^+)\sim (\rho^+ -\rho^-)\varphi(n)$, as $n\to +\infty$.
\end{proof}

\begin{proof}[Proof of Lemma \ref{lemma: order of Phi(n; rho-, rho+)}]
In the proof of the previous lemma, we have
\[\varphi(n; \rho^-, \rho^+)=(\rho^+-\rho^-)\varphi(n)+\varepsilon(n),\]
with $\lim\limits_{n\to+\infty}\frac{\varepsilon(n)}{\varphi(n)}= 0$.

So, $ \Phi(n; \rho^-, \rho^+)=\sum\limits_{q=1}^{n}\varphi(q; \rho^-, \rho^+)=(\rho^+-\rho^-)\sum\limits_{q=1}^{n}\varphi(q)+\sum\limits_{q=1}^{n}\varepsilon(q).$

Note that $\varphi(n)$, $n\in\mathbb{N}$ are positive integers.   Hence,
\[\lim_{n\to+\infty}\frac{\sum_{q=1}^{n}\varepsilon(q)}{\sum_{q=1}^{n}\varphi(q)}=0.\]

Therefore,
\[\Phi(n; \rho^-, \rho^+)\sim (\rho^+-\rho^-)\Phi(n)\sim \frac{3(\rho^+-\rho^-)}{\pi^2} n^2,\quad  \text{ as } n\to +\infty. \qedhere\]
\end{proof}

Now, for a given prime number $n_0$, we consider the number of irreducible  fractions  between $\rho^-<\rho^+$, whose denominators are relatively prime to $n_0$ and are not exceeding $n$. Let
\begin{align*}
\Psi(n; \rho^-, \rho^+)&=\sharp\{\frac{p}{q}:  \rho^-<\frac{p}{q}<\rho^+,  q\le n,   \gcd(p,q)=1, \gcd(q, n_0)=1.\}\\
&=\sum_{\begin{subarray}{l} q=1,2,\cdots, n;\\
 \gcd(q, n_0)=1  \end{subarray}}\varphi(q;  \rho^-, \rho^+).
\end{align*}
We will estimate  the order of  $\Psi(n; \rho^-,\rho^+)$.

\begin{lemma}\label{lemma: order of Psi(n; rho-, rho+)}
\[0<C_1\le\liminf_{n\to+\infty}\Psi(n; \rho^-, \rho^+)\frac{1}{n^2}\le \limsup_{n\to +\infty} \Psi(n; \rho^-, \rho^+)\frac{1}{n^2}\le C_2<+\infty, \]
  where $C_1$, $C_2$ are constants depending on  $\rho^+$, $\rho^-$ and the prime number $n_0$.
\end{lemma}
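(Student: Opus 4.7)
My plan is to mirror the proof of Lemma \ref{lemma: order of Phi(n; rho-, rho+)}, imposing the additional coprimality constraint $\gcd(q,n_0)=1$ on the summation variable. From the proof of Lemma \ref{Lemma: order of phi(n; rho-, rho+)} we already have the pointwise identity $\varphi(q;\rho^-,\rho^+)=(\rho^+-\rho^-)\varphi(q)+\varepsilon(q)$ with $|\varepsilon(q)|\le 2^{s(q)}$, where $s(q)$ is the number of distinct prime factors of $q$. Using the standard bound $\sum_{q\le n}2^{s(q)}=O(n\log n)=o(n^2)$ and summing the identity over $q\le n$ coprime to $n_0$ gives
\[\Psi(n;\rho^-,\rho^+)=(\rho^+-\rho^-)\,S(n)+o(n^2),\qquad S(n):=\sum_{\substack{q\le n\\ \gcd(q,n_0)=1}}\varphi(q).\]
It therefore suffices to show that $S(n)$ is sandwiched between two positive constant multiples of $n^2$.

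The upper bound is immediate: $S(n)\le \Phi(n)$ and Theorem~\ref{thm: the order of euler's phi function}(iv) give $\limsup_{n\to\infty} S(n)/n^2\le 3/\pi^2$, which supplies $C_2$.

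For the lower bound, I would write $S(n)=\Phi(n)-B(n)$ with $B(n):=\sum_{q\le n,\,n_0\mid q}\varphi(q)$ and substitute $q=n_0 m$. Using the multiplicativity of Euler's $\varphi$ (together with the fact that $n_0$ is prime), one verifies that $\varphi(n_0m)=(n_0-1)\varphi(m)$ when $n_0\nmid m$ and $\varphi(n_0m)=n_0\varphi(m)$ when $n_0\mid m$; in either case $\varphi(n_0m)\le n_0\varphi(m)$. Hence
\[B(n)\le n_0\,\Phi(n/n_0)\sim \frac{3n^2}{\pi^2 n_0},\]
and, combined with $\Phi(n)\sim 3n^2/\pi^2$, this produces
\[\liminf_{n\to\infty}\frac{S(n)}{n^2}\ge \frac{3}{\pi^2}-\frac{3}{\pi^2 n_0}=\frac{3(n_0-1)}{\pi^2 n_0}>0,\]
which is strictly positive because the prime $n_0\ge 2$, yielding $C_1$. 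The only genuinely delicate point is making sure that the subtraction $\Phi(n)-B(n)$ leaves a strictly positive leading coefficient; the one-step estimate $\varphi(n_0m)\le n_0\varphi(m)$ is exactly sharp enough for this, and the primality of $n_0$ is what prevents having to iterate the recursion over higher prime powers. Both constants $C_1$ and $C_2$ then depend only on $\rho^+-\rho^-$ and on $n_0$, as required.
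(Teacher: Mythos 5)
Your argument is correct and follows essentially the same route as the paper: the pointwise identity $\varphi(q;\rho^-,\rho^+)=(\rho^+-\rho^-)\varphi(q)+\varepsilon(q)$, the decomposition $S(n)=\Phi(n)-\sum_{m\le n/n_0}\varphi(n_0 m)$, and the one-step bound $\varphi(n_0 m)\le n_0\varphi(m)$ are exactly the paper's ingredients. The only minor deviations are that you bound the error term directly via $\sum_{q\le n}2^{s(q)}=O(n\log n)$ rather than via $\varepsilon(n)/\varphi(n)\to 0$ as in the paper, and you use the cruder (but sufficient) upper bound $S(n)\le\Phi(n)$ where the paper records the sharper constant $(1-\tfrac{n_0-1}{n_0^2})\tfrac{3}{\pi^2}$; neither change affects the result.
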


\begin{proof}
  \begin{align*}
\Psi(n; \rho^-, \rho^+)&=\sum_{\begin{subarray}{l} q=1,2,\cdots, n;\\
 \gcd(q, n_0)=1
 \end{subarray}}\left((\rho^+-\rho^-)\varphi(q)+\varepsilon(q)\right)\\
&=(\rho^+-\rho^-)\sum_{\begin{subarray}{l} q=1,2,\cdots, n;\\
 \gcd(q, n_0)=1
 \end{subarray}}\varphi(q)+\sum_{\begin{subarray}{l} q=1,2,\cdots, n;\\
 \gcd(q, n_0)=1
 \end{subarray}}\varepsilon(q),
\end{align*}
where the error terms $\varepsilon(q)$ satisfies $\displaystyle{\lim_{n\to+\infty}\frac{\varepsilon(n)}{\varphi(n)}=0}$.

Note that
\[
 \sum_{\begin{subarray}{l} q=1,2,\cdots, n;\\
 \gcd(q, n_0)=1  \end{subarray}}\varphi(q)=\sum_{q=1}^{n}\varphi(q)-\sum_{k=1}^{\lfloor \frac{n}{n_0}\rfloor}\varphi(k n_0),
\]
and that
\[
  \varphi(kn_0)=\left\{\begin{array}{ll}
                        \varphi(k)(n_0-1), \quad &\gcd(n_0,k)=1, \\
                        \varphi(k)n_0, \quad   &\gcd(n_0, k)=n_0.
                       \end{array}\right.
\]
Then,
\[
\sum_{q=1}^{n}\varphi(q)-\sum_{k=1}^{\lfloor \frac{n}{n_0}\rfloor}n_0\varphi(k)\le \sum_{\begin{subarray}{l} q=1,2,\cdots, n;\\
 \gcd(q, n_0)=1  \end{subarray}}\varphi(q)\le \sum_{q=1}^{n}\varphi(q)-\sum_{k=1}^{\lfloor \frac{n}{n_0}\rfloor}(n_0-1)\varphi(k).
\]
By Theorem \ref{thm: the order of euler's phi function}, $\Phi(n)=\sum\limits_{k=1}^{n}\varphi(k)=\frac{3}{\pi^2} n^2+O(n\log n)$, as $n\to +\infty$, we have
\begin{align*}
  & \sum_{q=1}^{n}\varphi(q)-\sum_{k=1}^{\lfloor \frac{n}{n_0}\rfloor}n_0\varphi(k)=(1-\frac{1}{n_0})\frac{3}{\pi^2} n^2+O(n\log n),\\
  & \sum_{q=1}^{n}\varphi(q)-\sum_{k=1}^{\lfloor \frac{n}{n_0}\rfloor}(n_0-1)\varphi(k)=(1-\frac{n_0-1}{n_0^2})\frac{3}{\pi^2}n^2+O(n\log n).
\end{align*}

Recall that $\displaystyle{\frac{\varepsilon (n)}{\varphi(n)}\to 0}$,  we have
$\displaystyle{\frac{\sum_{k=1}^{n}|\varepsilon(k)|}{n^2}\to 0}$, as $n\to+\infty$.
Therefore,
\begin{align*}
&\liminf_{n\to+\infty}\Psi(n; \rho^-, \rho^+)\frac{1}{n^2}\ge (\rho^+-\rho^-)\frac{3}{\pi^2} (1-\frac{1}{n_0})>0,\\
&\limsup_{n\to+\infty}\Psi(n; \rho^-, \rho^+)\frac{1}{n^2}\le (\rho^+-\rho^-)\frac{3}{\pi^2} (1-\frac{n_0-1}{n_0^2})<\infty.\qedhere
\end{align*}
\end{proof}

\subsection{Rotation number}\label{subsec: rotaion number}
In this section, we will introduce the rotation numbers for annulus homeomorphism. For more information,  refer to \cite{Franks88} and \cite{Lecalvez01}.

 We denote  by $\mathbb{A}$ the open  (resp.  the closed) annulus unless an explicit mention, i.e.,  $\mathbb{A}=\mathbb{R}/\mathbb{Z}\times (0,1)$ (resp. $\mathbb{A}=\mathbb{R}/\mathbb{Z}\times [0,1]$),  by $\pi$ the covering map of the  annulus
\begin{eqnarray*}
\pi\,:\, \mathbb{R}\times(0,1)\quad (\mathrm{resp.}\quad\mathbb{R}\times[0,1])&\rightarrow& \mathbb{A}\\
(x,y)&\mapsto&(x+\mathbb{Z},y),
\end{eqnarray*}
and by  $T$ the generator of the covering transformation group
\begin{eqnarray*}
T\,:\, \mathbb{R}\times(0,1)\quad (\mathrm{resp.}\quad\mathbb{R}\times[0,1])&\rightarrow&
\mathbb{R}\times(0,1)\quad (\mathrm{resp.}\quad\mathbb{R}\times[0,1]) \\
(x,y)&\mapsto&(x+1,y).
\end{eqnarray*}
Coordinates are denoted as $z\in\mathbb{A}$ and $\tilde{z}$ in the covering space. Homeomorphisms of $\mathbb{A}$ are denoted by $f$,
and their lifts to the covering space are denoted by $\widetilde{f}$.

Consider the homeomorphism $f$ of $\mathbb{A}$ that is isotopic to the identity.  We say that a positively recurrent point $z$ has a
\emph{rotation number} $\rho(\widetilde{f}, z)\in \mathbb{R}$ for a lift $\widetilde{f}$ of $f$ to the universal covering space of $\mathbb{A}$,
if for every subsequence $\{f^{n_k}(z)\}_{k\geq 0}$ of $\{f^n(z)\}_{n\geq 0}$ which converges to $z$, we have
\[\lim_{k\rightarrow+\infty}\frac{p_1\circ \widetilde{f}^{n_k}(\widetilde{z})-p_1(\widetilde{z})}{n_k}=\rho(\widetilde{f}, z), \]
where $\widetilde{z}\in \pi^{-1}(z)$ is a lift of $z$ and $p_1$ is the standard projection to the first coordinate.
The rotation number is stable by conjugacy  (see \cite{Lecalvez01}).
In particular,  if $z$ is a fixed or periodic point of $f$, the rotation number $\rho(\widetilde{f}, z)$ always exists and is rational.

A positively recurrent point of $f$ is also a positively recurrent point of $f^q$ for all $q\in \mathbb{N}$ (see \cite[Appendix]{Wang14}). By the definition of the rotation number, we easily get  the following elementary properties:
\begin{enumerate}\label{prop:ROT}
  \item[1.] $\rho(T^k\circ\widetilde{f}, z)=\rho(\widetilde{f}, z)+k$ for every $k\in \mathbb{Z}$;
  \item[2.] $\rho(\widetilde{f}^q, z)=q\rho(\widetilde{f}, z)$ for every $q\in \mathbb{N}$.
\end{enumerate}

We call a simple closed curve  in $\mathbb{A}$ an \emph{essential circle} if  it is not null-homotopic. We say that $f$ satisfies the \emph{intersection property} if  any essential circle in $\mathbb{A}$ meets its image by $f$. It is easy to see that a homeomorphism $f$ that preserves a finite area satisfies the intersection property.

We need the following theorem due to Franks \cite{Franks88} when $\mathbb{A}$ is the closed annulus and $f$ has no wandering point,  and  improved by Le Calevez \cite{Lecalvez05} (see also \cite{Wang14}) when $\mathbb{A}$ is the open annulus and $f$ satisfies the intersection property:

\begin{theorem}\label{thm: Franks and le calvez}
Let $f$ be a homeomorphism of $\mathbb{A}$ that is isotopic to the identity and satisfies the intersection condition,  and $\widetilde{f}$  one of its lifts to the universal covering space. Suppose that there exist two recurrent points $z_{1}$ and $z_{2}$ such that
$-\infty\leq \rho(\widetilde{f}, z_{1})<\rho(\widetilde{f}, z_{2})\leq +\infty$. Then for
any rational number $p/q\in (\rho(\widetilde{f}, z_{1}),\rho(\widetilde{f}, z_{2}))$
written in an irreducible way, there exists an  interior $q$ prime-periodic point with rotation number $p/q$.
\end{theorem}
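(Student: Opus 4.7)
The plan is to reduce to a fixed-point statement and then invoke a Brouwer-type argument on the universal cover. Fix an irreducible $p/q$ with $\rho(\widetilde{f},z_1)<p/q<\rho(\widetilde{f},z_2)$ and set $\widetilde{g}=T^{-p}\circ\widetilde{f}^{\,q}$. By the two elementary properties of rotation numbers listed before the theorem, the points $z_1$ and $z_2$ remain positively recurrent for the projection $g=f^q$, with
\[
\rho(\widetilde{g},z_1)=q\,\rho(\widetilde{f},z_1)-p<0<q\,\rho(\widetilde{f},z_2)-p=\rho(\widetilde{g},z_2).
\]
Moreover $g$ is still isotopic to the identity and inherits the intersection property from $f$ (any essential circle disjoint from its $g$-image would be disjoint from its $f$-image after iterating). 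Any interior fixed point $z^*$ of $g$ whose lift is fixed by $\widetilde{g}$ then gives an interior $q$-periodic point of $f$ with $\rho(\widetilde{f},z^*)=p/q$; since $\gcd(p,q)=1$, $z^*$ cannot have prime-period $q'<q$ (otherwise $p/q$ would also be writable as $p'/q'$). So everything reduces to producing an interior fixed point of $\widetilde{g}$ under the hypothesis that there exist two positively recurrent points with rotation numbers of strictly opposite signs for $\widetilde g$.

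Next, I would argue by contradiction and assume $\widetilde{g}$ has no fixed point. In the closed-annulus case this is Franks' original setting: one extends $g$ to a sphere by collapsing the boundary circles and applies Brouwer plane translation theory to $\widetilde g$, which yields a Brouwer line through any point; a recurrent point on such a line forces a Brouwer chain that cannot close up, whereas the two recurrent points $z_1,z_2$ with opposite rotation tendencies would simultaneously require drift to the left and to the right, a contradiction. For the open-annulus case under only the intersection property, I would invoke Le Calvez's equivariant Brouwer foliation: since $\widetilde g$ has no fixed point and $g$ satisfies the intersection property, there is a $T$-equivariant singular foliation $\mathcal{F}$ of $\mathbb{R}\times(0,1)$ by oriented Brouwer lines of $\widetilde g$. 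Each orbit of $\widetilde g$ admits a transverse path in $\mathcal{F}$; a positively recurrent point whose rotation number is, say, positive must have a transverse trajectory whose first coordinate tends to $+\infty$, and the converse for negative rotation number. Now using the intersection property together with the recurrence of $z_1$ and $z_2$, one constructs an essential simple loop (made of pieces of leaves of $\mathcal F$ and of orbit segments) that cannot meet its image under $g$, contradicting the intersection hypothesis.

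The hard step is this second paragraph: setting up Le Calvez's equivariant foliation in the open-annulus setting and extracting a genuine obstruction from two recurrent points of opposite rotation. This is where one loses the comfort of a compact invariant set, and one has to verify that the transverse trajectories of $z_1$ and $z_2$ really accumulate on the two ends of $\mathbb{A}$ (in the foliation sense) before one can splice them into a forbidden essential curve. The final bookkeeping — that the fixed point produced lies in the interior of $\mathbb{A}$ rather than on $\partial\mathbb{A}$ — I would handle by noting that boundary orbits carry a single well-defined rotation number; if that boundary rotation number equals $p/q$, a small area-preserving perturbation, or a direct application of the theorem to a slightly smaller sub-annulus strictly inside the two recurrent orbits' closures, moves the candidate fixed point into the interior without disturbing the strict inequalities on rotation numbers.
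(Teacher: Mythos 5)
The paper does not prove this theorem; it is quoted as a known result of Franks (closed annulus, no wandering points) and of Le Calvez and Wang (open annulus, intersection property), so there is no internal argument for me to compare yours against. I will assess your sketch on its own merits.

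Your reduction to producing a fixed point of $\widetilde g = T^{-p}\circ\widetilde f^{\,q}$ is the standard first step, and the deduction that such a fixed point has rotation number $p/q$ and hence prime period exactly $q$ is correct. However, the claim that $g=f^q$ inherits the intersection property from $f$ is not correctly justified. You argue that ``any essential circle disjoint from its $g$-image would be disjoint from its $f$-image after iterating,'' but that inference fails: from $C\cap f^q(C)=\emptyset$ one cannot conclude that some essential circle is disjoint from its single $f$-image, since $C$ may meet $f(C)$, which meets $f^2(C)$, and so on, even while $C$ and $f^q(C)$ are disjoint and nested. The intersection property does pass to iterates, but the usual proof fills one complementary component of $C$, iterates the filled region under $f$, and extracts an essential circloid from the boundary of the union; it is not a one-line observation. (In the paper's applications $f$ in fact preserves a finite area, so $f^q$ automatically satisfies the intersection condition, but the theorem as stated assumes only the intersection condition on $f$ itself, so the gap is real.)

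The second paragraph is, by your own admission, a plan rather than a proof. The passage from a positively recurrent point with positive rotation number to a transverse trajectory drifting to $+\infty$, and the construction of an essential loop disjoint from its image by splicing transverse arcs and orbit segments, are precisely the technical core of Le Calvez's foliated Brouwer theory and of Wang's open-annulus generalization, and none of it is carried out here. Likewise, the closing remark that one can push a boundary fixed point into the interior ``by a small area-preserving perturbation'' invokes a hypothesis the theorem does not have. So the proposal identifies the right strategy and makes a correct initial reduction, but it contains one concretely wrong justification (the intersection property for $f^q$) and leaves the main Brouwer-theoretic step and the interior bookkeeping unargued.
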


\subsection{Transverse foliation and maximal isotopy}
Let $M$ be an oriented surface, and $\mathcal{F}$ an oriented topological foliation on $M$ whose leaves are oriented curves. We say that a path is \emph{positively transverse} to $\mathcal{F}$, if it meets the leaves of $\mathcal{F}$ locally from left to right.
Let $f$ be a homeomorphism on $M$, and $I=(f_t)_{t\in[0,1]}$  an identity isotopy of $f$, i.e., an isotopy joining the identity to $f$.
We say that an oriented foliation $\mathcal{F}$ (without singularity)  is a \emph{transverse foliation}  of $I$ if for every $z\in M$, there is a path  that is homotopic to the trajectory $t\rightarrow f_t(z)$ of $z$ along $I$ with the end points fixed and  is positively transverse to   $\mathcal{F}$.
If $f$ does not have any contractible fixed point associated to $I$, i.e., a fixed point of $f$ whose trajectory along $I$ is null homotopic in $M$, Le Calvez  proved the existence of the transverse foliation \cite[Theorem~1.3]{Lecalvez05}.

Now, we consider the case that $f$ has a contractible fixed point. We call  $K\subset \mathrm{Fix}(f)$ \emph{unlinked}, if there is an identity isotopy  $I=(f_t)_{t\in[0,1]}$ of $f$ such that $K\subset \mathrm{Fix}(I)=\cap_{t\in[0,1]}\mathrm{Fix}(f_t)$. We call an identity isotopy \emph{maximal}, if $\mathrm{Fix}(I)$ is maximal for including among all unlinked sets.  Moreover, if $I$ is a maximal isotopy, $f|_{M\setminus \mathrm{Fix}(I)}$ does not have any contractible fixed point. We consider a singular foliation $\mathcal{F}$ and call it  a \emph{transverse foliation} of $I$, if the set of singularities $\mathrm{Sing}(\mathcal{F})$ is equal to the fixed point set $\mathrm{Fix}(I)$ of $I$, and if $\mathcal{F}|_{M\setminus\mathrm{Sing}(\mathcal{F})}$ is transverse to $I|_{M\setminus \mathrm{Fix}(I)}$. Combine Le Calvez's result and the following theorem, we always get the existence of  a maximal isotopy $I$ and a transverse foliation $\mathcal{F}$.

\begin{theorem}\cite[Corollary 1.2]{BCLR}\label{thm: BCLR}
Let $f$ be a homeomorphism of $M$ and $K$ an unlinked set. Then,  there is a maximal isotopy $I$ such that $K\subset \mathrm{Fix}(I)$.
\end{theorem}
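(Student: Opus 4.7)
The plan is to apply Zorn's lemma to the poset $\mathcal{P}$ of all unlinked subsets of $\mathrm{Fix}(f)$ that contain $K$, partially ordered by inclusion. Since $K\in\mathcal{P}$ by hypothesis, $\mathcal{P}$ is nonempty. Once a maximal element $K_{\max}\in\mathcal{P}$ is produced, any identity isotopy $I$ realizing $K_{\max}\subset\mathrm{Fix}(I)$ will automatically be a maximal isotopy: indeed $\mathrm{Fix}(I)$ is itself unlinked (it is fixed pointwise by $I$), contains $K_{\max}$, and hence must equal $K_{\max}$ by maximality; any strictly larger unlinked set would contradict the maximality of $K_{\max}$ in $\mathcal{P}$.

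The heart of the argument is thus the verification of the chain condition in $\mathcal{P}$. Given a totally ordered family $(K_\alpha)_{\alpha\in A}$ in $\mathcal{P}$, with corresponding isotopies $I_\alpha=(f_t^\alpha)_{t\in[0,1]}$ satisfying $K_\alpha\subset\mathrm{Fix}(I_\alpha)$, I would first reduce to the finite case by proving the equivalence: a set $K'\subset\mathrm{Fix}(f)$ is unlinked if and only if every finite $K_0\subset K'$ is unlinked and the associated isotopies can be taken in a coherent homotopy class. The ``only if'' direction is immediate by restriction. For the ``if'' direction, one fixes a ``reference'' isotopy $I_0$ of $f$ and, for each finite $K_0\subset K_\infty:=\bigcup_\alpha K_\alpha$, records the homotopy class (rel endpoints) of the trajectory $t\mapsto f^\alpha_t(z)$ for $z\in K_0$ inside $\alpha$ large enough. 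The hypothesis that the $K_\alpha$ form a chain guarantees these classes are compatible as $K_0$ grows, so one can construct an isotopy $I_\infty$ of $f$ whose trajectory at every $z\in K_\infty$ is null-homotopic in $M\setminus K_\infty$, giving $K_\infty\subset\mathrm{Fix}(I_\infty)$.

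For the finite case itself, one uses the standard fact that when $K_\alpha\subset K_\beta$ are finite unlinked sets, an isotopy $I_\alpha$ fixing $K_\alpha$ can be modified, without altering its homotopy class on $M\setminus K_\beta$, to fix every point of $K_\beta\setminus K_\alpha$ as well; this is a local surgery near the extra fixed points, possible precisely because $K_\beta$ was assumed unlinked. Passing to the direct limit along the chain, together with the compatibility of homotopy classes recorded above, yields the required isotopy fixing the union. Hence every chain has an upper bound, Zorn's lemma applies, and a maximal element $K_{\max}$ exists.

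The main obstacle I expect is the limiting step for uncountable chains: showing that one can choose the surgeries coherently so that their limit is genuinely an isotopy (continuous in $t$ and fixing $K_\infty$ pointwise), rather than only a pointwise fix of a sequence of finite subsets. The crucial input will be that the property of being unlinked depends only on the homotopy information of the trajectories relative to the ambient surface, which is a discrete invariant on each finite subset and therefore behaves well under direct limits; any two isotopies realizing the same discrete data differ by an isotopy through homeomorphisms fixing $K_\infty$, allowing a coherent choice. Once this coherence is established, the rest of the argument is a direct application of Zorn's lemma as sketched.
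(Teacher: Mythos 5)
The paper does not prove this result; it cites it verbatim as Corollary~1.2 of B\'eguin--Crovisier--Le~Roux, so there is no in-paper argument to compare against. Your plan is to give an independent proof via Zorn's lemma, which is the natural first attempt and also the broad organizing principle of the BCLR paper, so the overall shape is right. But the step you flag as ``the main obstacle''---that a chain $(K_\alpha)$ of unlinked sets has an unlinked union $K_\infty$---is not a technicality to be dispatched by general position or by discreteness of homotopy data. It is, in essence, the entire content of \cite{BCLR} (a long paper devoted to precisely this closure property), and your sketch of it contains genuine gaps rather than a proof.

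Concretely, two of your assertions do not hold up. First, you propose to ``modify $I_\alpha$ without altering its homotopy class on $M\setminus K_\beta$'' so as to additionally fix $K_\beta\setminus K_\alpha$; but $I_\alpha$ is not an isotopy of $M\setminus K_\beta$ in the first place (it moves points of $K_\beta\setminus K_\alpha$), so the phrase ``its homotopy class on $M\setminus K_\beta$'' is not well defined, and nothing forces the class of $I_\alpha$ (as an isotopy fixing $K_\alpha$) to be compatible with the class of some $I_\beta$ fixing $K_\beta$. Second, your crucial coherence claim---``any two isotopies realizing the same discrete data differ by an isotopy through homeomorphisms fixing $K_\infty$''---already presupposes that there exist isotopies fixing $K_\infty$ pointwise, which is exactly what you are trying to establish; the argument is circular. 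Beyond these local issues, the real difficulty is that $K_\infty$ can be an arbitrary closed set (Cantor sets, sets with wild accumulation), the fundamental group of $M\setminus K_\alpha$ changes non-trivially as $\alpha$ grows, and producing a single \emph{continuous} isotopy of $M$ fixing all of $K_\infty$ from isotopies fixing its finite (or even compact) subsets is a hard convergence problem. This is why BCLR do not rest on a direct limit of ``discrete homotopy data'' but develop a substantial local-to-global theory (Jaulent had earlier obtained a weaker version with a non-closed maximal unlinked set for closely related reasons). So the proposal identifies the correct framework but does not actually prove the chain condition, and hence does not prove the theorem.
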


In particular, for area preserving homeomorphisms of the sphere, if the maximal isotopy has only finitely many fixed points, then the dynamics of the transverse foliation is quite simple: there is neither a closed leaf nor a leaf from and toward  the same singularity,  and hence every leaf joins one singularity to another singularity.

\subsection{Prime ends compactification and prime ends rotation number}\label{S: pre-prime-ends Compactification}
In this section, we will give  the definitions  and the result that we need for our paper.
More details can be found in \cite{KLN15}.

Let $U\subset \mathbb{S}^2$ be an open topological disk such that $\mathbb{S}^2\setminus U$ contains at least two points.
We can define the  \emph{prime ends compactification} of the open topological disk $U$, introduced by Carath\'{e}odory \cite{Car13},  by attaching a circle of prime ends $\backsimeq \mathbb{S}^1$ and topologizing $U\sqcup \mathbb{S}^1$ appropriately, making it homeomorphic to a closed disk.
The prime end compactification can be defined purely topologically (see \cite{Mat82} and \cite{KLN15} for more details), but has another significance if we put a complex structure on $\mathbb{S}^2$.
We can find a conformal map $\phi$ between $U$ and the open disk $\mathbb{D}=\{z\in \mathbb{C}\mid |z|<1\}$ and we put on $U\sqcup\mathbb{S}^1$ the topology (up to a homeomorphism of the resulting space, independent of $\phi$) induced from the natural topology of $\overline{\mathbb{D}}$ in $\mathbb{C}$ by the bijection
\[\overline{\phi}:U\sqcup\mathbb{S}^1\rightarrow \overline{\mathbb{D}},\]
which is equal to $\phi$ on $U$ and to the identity on $\mathbb{S}^1$,  where $\overline{\mathbb{D}}$ is the closure of $\mathbb{D}$.

Let $f$ be an orientation  preserving homeomorphism of $\mathbb{S}^2$ such that $f(U)=U$. Then, $f|_{U}$ can be extended to a homeomorphism of $U\sqcup \mathbb{S}^{1}$, which is still denoted by $f$. Moreover,
$f|_{\mathbb{S}^1}$ is an orientation preserving homeomorphism of the circle.  We define the \emph{prime ends rotation number} $\rho(f, U)\in \mathbb{R}/\mathbb{Z}$ of $f$ on the boundary of $U$  as the Poincar\'e's rotation number of $f|_{\mathbb{S}^1}$.

The following result is a direct corrollary of \cite[Theorem C]{KLN15}.
\begin{theorem}\label{thm: prime end rotation number}
  Let $f$ be an orientation and area preserving homeomorphism of the sphere $\mathbb{S}^2$, and $U\subset \mathbb{S}^2$  an $f$-invariant topological disk such that $\mathbb{S}^2\setminus U$ contains at least two points.  If $\rho(f,U)\ne 0$, then $f$ has at most one fixed point  in $\mathbb{S}^2\setminus U$.
\end{theorem}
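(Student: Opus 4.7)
The plan is to derive this as essentially a translation/specialization of Theorem C of Koropecki--Le Calvez--Nassiri, so the real work is to identify how a second fixed point in $\mathbb{S}^2\setminus U$ would be incompatible with a nonzero prime ends rotation number. I would argue by contradiction: assume $f$ admits two distinct fixed points $p_1,p_2\in\mathbb{S}^2\setminus U$. The pair $\{p_1,p_2\}$ is automatically unlinked (both are genuine fixed points, not merely periodic), so Theorem \ref{thm: BCLR} supplies a maximal identity isotopy $I$ of $f$ with $\{p_1,p_2\}\subset\mathrm{Fix}(I)$, and $f|_{\mathbb{S}^2\setminus\mathrm{Fix}(I)}$ has no contractible fixed point associated to $I$. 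Le Calvez's result then yields a singular transverse foliation $\mathcal{F}$ whose singularity set is $\mathrm{Fix}(I)\supset\{p_1,p_2\}$.

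Next I would use the $f$-invariance of $U$ to pass to the prime ends compactification $U\sqcup\mathbb{S}^1$. Because $\rho(f,U)\neq 0$ the induced circle homeomorphism $f|_{\mathbb{S}^1}$ has irrational or nonzero rational rotation number, so in particular has no boundary fixed point after a finite iterate whose prime ends rotation is a nonzero rational (and no boundary fixed points at all in the irrational case). The key technical step, which is exactly the content of Theorem~C of KLN15, is to transport this rotational behavior on $\mathbb{S}^1$ back into the sphere via the transverse foliation: one shows that $\mathcal{F}$ restricted to a neighborhood of $\partial U$ admits leaves whose $\alpha$- and $\omega$-limit sets must lie in $\mathrm{Fix}(I)\cap(\mathbb{S}^2\setminus U)$, and the area-preserving hypothesis rules out wandering behavior so that such leaves genuinely connect singularities.

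With that machinery in place, I would derive the contradiction as follows. The existence of two singularities $p_1,p_2$ in the complement of $U$ allows leaves of $\mathcal{F}$ to be organized into an essential transverse arc or curve in $U$ separating $p_1$ from $p_2$ (or equivalently, an $f$-positively transverse loop in $U$ on the prime ends side). Combined with the intersection property coming from area preservation, this produces an $f$-invariant essential structure in $U$ that forces the prime ends homeomorphism to have a fixed point, hence $\rho(f,U)=0$, contradicting the hypothesis.

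The main obstacle is the middle paragraph: making rigorous the passage from the singular transverse foliation $\mathcal{F}$ on $\mathbb{S}^2$ to a controlled description of the dynamics in the prime ends compactification, and showing that two complementary singularities actually obstruct a nonzero prime ends rotation. This is precisely where the area-preservation is essential—it rules out wandering leaves and ensures the combinatorial/topological pattern of $\mathcal{F}$ near $\partial U$ is governed by the singularities in $\mathbb{S}^2\setminus U$. Fortunately, this is exactly the situation addressed by \cite[Theorem C]{KLN15}, so I would invoke that result rather than reproving it from scratch.
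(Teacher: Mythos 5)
The paper itself offers no argument for this statement beyond the single sentence that it ``is a direct corollary of \cite[Theorem C]{KLN15}''; there is no proof to compare your machinery against. So the question is whether your proposal amounts to a correct application of that citation or to something more.

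As written, your proposal does not do more. You yourself say twice that the decisive step is ``exactly the content of Theorem~C of KLN15'' and that you would ``invoke that result rather than reproving it from scratch'' -- at which point the surrounding apparatus (maximal isotopy from Theorem~\ref{thm: BCLR}, the singular transverse foliation, the claim about $\alpha$- and $\omega$-limits of leaves, the ``essential transverse arc separating $p_1$ from $p_2$'') is dead weight: it plays no logical role once you cite Theorem~C, and it obscures the one-line reduction (area-preserving $\Rightarrow$ non-wandering, which is the hypothesis of KLN15's theorems) that makes the corollary ``direct.'' Worse, taken as a self-contained argument, the middle paragraphs have real gaps: you assert that the $\alpha$- and $\omega$-limit sets of leaves near $\partial U$ lie in $\mathrm{Fix}(I)\cap(\mathbb{S}^2\setminus U)$, but nothing you have set up rules out singularities of $\mathcal{F}$ inside $U$; and the passage from ``an essential transverse loop on the prime ends side'' to ``the prime ends homeomorphism has a fixed point'' is simply asserted, not argued (a nonzero rational prime ends rotation number is perfectly compatible with periodic, rather than fixed, behaviour on the boundary circle). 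None of this is salvageable without essentially redoing the KLN15 analysis. The correct proposal here is much shorter: observe that area-preservation implies $f$ is non-wandering, apply \cite[Theorem~C]{KLN15} in the form it is stated there, and note that the hypotheses match. If you want a genuinely independent proof, that is a substantial undertaking and not something the transverse-foliation heuristics you sketch would close.
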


\subsection[Cover $SO(3)$ by $SU(2)\cong \mathbb{S}^3$]{\texorpdfstring{Cover $SO(3)$ by $SU(2)\cong \mathbb{S}^3$}{Cover SO(3) by SU(2) = S3}}

In this section, we will give a classic covering map that covers $SO(3)$ by $SU(2)\cong \mathbb{S}^3$.  More details can be find in \cite[Section 1.6]{Sternberg}.

We can first identify
\[\mathbb{S}^3=\{(z_1, z_2): z_1, z_2\in\mathbb{C}, |z_1|^2+|z_2|^2=1\}\] with
$SU(2)$ by
\begin{align*}
  \mathbb{S}^3 & \to SU(2),\\
  (z_1, z_2) & \mapsto U=\begin{bmatrix}
            z_1 & z_2\\
            -\bar{z}_2 & \bar{z}_1
           \end{bmatrix}.
\end{align*}
Then, we will cover $SO(3)$ by $SU(2)$.

 Note that any $2\times 2$ traceless Hermitian matrix $M$ can be written as a linear combination of the Pauli matrices:
 \[\sigma_1=\begin{bmatrix}
              0 & 1\\
              1 & 0
            \end{bmatrix}, \quad
 \sigma_2=\begin{bmatrix}
           0 & -i \\
           i & 0
          \end{bmatrix}, \quad
 \sigma_3=\begin{bmatrix}
            1 & 0\\
            0 & -1
          \end{bmatrix}.\]
More explicitly,
\[M=\begin{bmatrix}
      c & a-i b\\
      a+i b & -c
    \end{bmatrix}=(\sigma_1, \sigma_2,\sigma_3)\begin{bmatrix}
                                                 a\\
                                                 b\\
                                                 c
                                               \end{bmatrix},\]
where $(a, b, c)^T\in \mathbb{R}^3$.
So, we can identify the linear space $V$  of  $2\times 2$ traceless Hermitian matrix  with $\mathbb{R}^3$ by choosing $\sigma_1$, $\sigma_2$, $\sigma_3$ as the basis of $V$, and can  define the norm on $V$ by
\[\|M\|_{\mathbb{R}^3}=\sqrt{-\det M}.\]

For any $U\in SU(2)$ and $M\in V$, $U^* M U$ is a Hermitian matrix with
\[\mathrm{tr}(U^* M U)= \mathrm{tr}(M)=0, \text{ and } \det(U^* M U)=\det M.\]
So, the map $M\mapsto U^* M U$ is an isometry of $V$, and hence corresponds to an orthogonal matrix in $O(3)$.
The group action
\begin{eqnarray*}
SU(2)\times V &\to & V,\\
(U, M) & \mapsto & U^* M U,
\end{eqnarray*} is continuous, and  induces a group homomorphism
\[R:  SU(2)  \to O(3).\]
Note that $R$ maps the unit matrix in $SU(2)$ to the unit matrix in $O(3)$. It is in fact a group homomorphism
\[R:  SU(2)  \to SO(3).\]
Moreover, it is indeed a two-covering map (see also \cite[Section 1.6]{Sternberg}).

Now, we have the  induced covering map
\[\pi : \mathbb{S}^3\to SO(3).\]
By computing $U^* \sigma_i U$, $i=1,2,3$,  we have
\[\pi(z_1,z_2)= \begin{bmatrix}
           \mathrm{Re}(z_1^2-\bar{z}_2^2) & - \mathrm{Im}(z_1^2+\bar{z}_2^2) & 2  \mathrm{Re}(z_1\bar{z}_2)\\
            \mathrm{Im}(z_1^2-\bar{z}_2^2) &  \mathrm{Re}(z_1^2+\bar{z}_2^2)  & 2 \mathrm{Im} (z_1 \bar{z_2})\\
           -2 \mathrm{Re}(z_1 z_2) & 2  \mathrm{Im}(z_1 z_2) & |z_1|^2- |z_2|^2
           \end{bmatrix} . \]

In conclusion, we have the following result:
\begin{lemma}\label{lemma: cover SO(3)}
The unit  $3$-dimensional sphere
\[\mathbb{S}^3=\{(z_1, z_2): z_1, z_2\in\mathbb{C}, |z_1|^2+|z_2|^2=1\}\]
 covers  $SO(3)$ twice by  the covering map
 \begin{eqnarray*}
  \pi: &\mathbb{S}^3 & \to SO(3),\\
  & (z_1, z_2) &\mapsto
           \begin{bmatrix}
           \mathrm{Re}(z_1^2-\bar{z}_2^2) & - \mathrm{Im}(z_1^2+\bar{z}_2^2) & 2  \mathrm{Re}(z_1\bar{z}_2)\\
            \mathrm{Im}(z_1^2-\bar{z}_2^2) &  \mathrm{Re}(z_1^2+\bar{z}_2^2)  & 2 \mathrm{Im} (z_1 \bar{z_2})\\
           -2 \mathrm{Re}(z_1 z_2) & 2  \mathrm{Im}(z_1 z_2) & |z_1|^2- |z_2|^2
           \end{bmatrix}.
 \end{eqnarray*}
\end{lemma}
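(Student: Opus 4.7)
The plan is to carry out exactly the construction sketched in the paragraphs preceding the lemma: identify $\mathbb{S}^3$ with $SU(2)$ via $(z_1, z_2) \mapsto U = \begin{bmatrix} z_1 & z_2 \\ -\bar{z}_2 & \bar{z}_1 \end{bmatrix}$; use the Pauli matrices $\sigma_1, \sigma_2, \sigma_3$ to identify the three-dimensional real vector space $V$ of traceless Hermitian $2\times 2$ matrices with $\mathbb{R}^3$, equipped with the norm $\|M\|_{\mathbb{R}^3} = \sqrt{-\det M}$; and define the conjugation action $(U, M) \mapsto U^* M U$ of $SU(2)$ on $V$. I need to verify three things: the action is well defined and isometric, the induced homomorphism $R : SU(2) \to O(3)$ in fact takes values in $SO(3)$ and is a two-to-one covering, and the explicit matrix formula holds.

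For the first two points, the key observations are that $U^* M U$ remains Hermitian and traceless (conjugation preserves both the adjoint and the trace), and that $\det(U^* M U) = \det M$, so $M \mapsto U^* M U$ is a linear isometry of $(V, \|\cdot\|_{\mathbb{R}^3})$. This yields a continuous homomorphism $R : SU(2) \to O(3)$ which, because $SU(2)$ is connected and $R(I) = I$, lands in $SO(3)$. The kernel consists of those $U$ for which $U^* M U = M$ for every traceless Hermitian $M$; combined with $U^* I U = I$ this forces $U$ to commute with every Hermitian matrix, hence with every element of $M_2(\mathbb{C})$, so $U = \pm I$. Surjectivity then follows from a standard argument: the differential of $R$ at the identity is the adjoint representation $\mathrm{ad} : \mathfrak{su}(2) \to \mathfrak{so}(3)$, a linear isomorphism of three-dimensional real Lie algebras, so $R$ is a local diffeomorphism; since $SU(2) \cong \mathbb{S}^3$ is compact, the image $R(SU(2))$ is both open and closed in the connected group $SO(3)$, hence all of $SO(3)$.

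Finally, to extract the explicit matrix in the statement, I would compute $U^* \sigma_j U$ for $j = 1, 2, 3$ and read off its coordinates in the basis $\sigma_1, \sigma_2, \sigma_3$; these form the $j$-th column of $\pi(z_1, z_2)$. The main obstacle is not conceptual but purely computational bookkeeping: one has to multiply out the $2 \times 2$ matrices carefully, split each resulting entry into its real and imaginary parts, and simplify the diagonal entries using $|z_1|^2 + |z_2|^2 = 1$, in order to recognise expressions such as $\mathrm{Re}(z_1^2 - \bar{z}_2^2)$, $\mathrm{Im}(z_1^2 + \bar{z}_2^2)$ and $2\,\mathrm{Re}(z_1 \bar{z}_2)$ exactly as they appear in the claimed formula for $\pi(z_1, z_2)$.
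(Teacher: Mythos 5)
Your proposal follows exactly the construction the paper lays out in the paragraphs preceding the lemma: identify $\mathbb{S}^3$ with $SU(2)$, let $SU(2)$ act by conjugation on the space $V$ of traceless Hermitian matrices identified with $\mathbb{R}^3$ via the Pauli basis, observe this gives an isometry and hence a homomorphism into $O(3)$ that lands in $SO(3)$, and read off the columns of $\pi(z_1,z_2)$ from $U^*\sigma_j U$. The only difference is that where the paper simply cites Sternberg for the two-sheeted covering property, you fill in the kernel computation (a $U$ commuting with all Hermitian matrices commutes with everything, hence $U=\pm I$) and a surjectivity argument via the adjoint differential and an open-closed argument; both are correct and standard.
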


\section{Proof of the main theorem}

In this section, we consider the homeomorphism $f$ of the closed or open annulus that is isotopic to the identity. We will prove the main theorem (Theorem \ref{thm: growth of periodic orbits}) of the paper and the following Theorem \ref{thm: growth of n periodic}.

\begin{theorem}\label{thm: growth of n periodic}
  Let $f$ be a homeomorphism of the closed or open annulus, that is isotopic to the identity and preserves a finite area. Suppose that $f$ has a fixed or  periodic  point, and is of infinite order, i.e., there is not any positive integer $m$ such that $f^m=\mathrm{Id}$. Then, one of the following three cases happens:
   \begin{itemize}
     \item[i)]  $f$ has a fixed point,
     \item[ii)] $f$ does not have any fixed point, and have  periodic orbits with prime-periods $n_1, n_2, \cdots, n_s$ such that $\gcd(n_1, n_2,\cdots, n_s)=1$,
         \item [iii)] the periods of  all the periodic orbits have a greatest common divisor $k\ge 2$.
   \end{itemize}
    In case $i)$ and $ii)$, we have
  \[\liminf_{n\to +\infty} N_{=n}\frac{\log\log n}{n}>0.\]
In case $iii)$, we have,
 \[\liminf_{n\to +\infty} N_{=kn}\frac{\log\log n}{n}>0.\]
\end{theorem}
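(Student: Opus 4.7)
The plan is to exhibit, in each case of the trichotomy, a lift $\widetilde{f}$ and two recurrent points $z_1,z_2$ with distinct rotation numbers $\rho(\widetilde{f},z_1)<\rho(\widetilde{f},z_2)$; once such a rotation interval $(\rho^-,\rho^+)$ is produced, Theorem~\ref{thm: Franks and le calvez} yields, for every irreducible fraction $p/n\in(\rho^-,\rho^+)$, an interior $n$ prime-periodic orbit of rotation number $p/n$. Since different rotation numbers label different orbits, this forces $N_{=n}\ge \varphi(n;\rho^-,\rho^+)$, and Lemma~\ref{Lemma: order of phi(n; rho-, rho+)} together with Theorem~\ref{thm: the order of euler's phi function}(iii) gives
\[
\liminf_{n\to+\infty} N_{=n}\,\frac{\log\log n}{n}\ \geq\ (\rho^+-\rho^-)\,e^{-\gamma}\ >\ 0,
\]
so the entire task reduces to producing the rotation interval.

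The first step is to dispatch case iii) by passing to $f^k$. If every $f$-prime-period is divisible by $k$, then a point of $f$-prime-period $km$ has $f^k$-prime-period exactly $m$ (one direction is immediate; conversely any $f^k$-period $m'<m$ would produce the $f$-period $km'<km$, contradicting the case iii) hypothesis). This gives a bijection between $km$ prime-periodic orbits of $f$ and $m$ prime-periodic orbits of $f^k$. The set of $f^k$-prime-periods has gcd $1$, and $f^k$ is still area-preserving and of infinite order, so $f^k$ falls in case i) or ii), and the bound $\liminf N^{f^k}_{=m}\log\log m/m>0$ translates verbatim into $\liminf N^{f}_{=km}\log\log m/m>0$.

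For case ii), the absence of a fixed point forces every $n_i\ge 2$, so if all the $n_i$ coincided their gcd would be that common value $\ge 2$, contradicting the hypothesis; hence two periodic orbits with distinct prime-periods $n_1\ne n_2$ exist. For any lift $\widetilde{f}$, a periodic point of prime-period $n_i$ necessarily has rotation number of the form $p_i/n_i$ with $\gcd(p_i,n_i)=1$ (otherwise the prime-period would be a proper divisor of $n_i$), and the equality $p_1/n_1=p_2/n_2$ would force $n_1\mid n_2$ and $n_2\mid n_1$, i.e.\ $n_1=n_2$, a contradiction. These two orbits therefore provide the desired interval, and the general estimate applies.

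The hard part is case i), which I expect to be the main obstacle. Let $p_0$ be a fixed point and choose the lift $\widetilde{f}$ fixing a lift of $p_0$, so that $\rho(\widetilde{f},p_0)=0$; the task is to produce a recurrent point $z_2$ with $\rho(\widetilde{f},z_2)\ne 0$. My plan is to invoke a maximal isotopy $I$ with $p_0\in\mathrm{Fix}(I)$ (Theorem~\ref{thm: BCLR}) together with its transverse singular foliation $\mathcal{F}$: since $f$ has infinite order, $f|_{\mathbb{A}\setminus\mathrm{Fix}(I)}$ is a nontrivial area-preserving map without contractible fixed points, so Poincar\'e recurrence yields abundant recurrent trajectories positively transverse to $\mathcal{F}$, and a careful accounting of how such trajectories must cross essential loops of $\mathbb{A}$ (ruling out that every recurrent point has zero lift displacement, which would conflict with $f\not=\mathrm{Id}$) produces the required $z_2$. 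Should this direct analysis prove delicate for the open annulus or for large $\mathrm{Fix}(I)$, I would compactify $\mathbb{A}$ inside $\mathbb{S}^2$, choose an $f$-invariant topological disk $U$ around $p_0$, and apply Theorem~\ref{thm: prime end rotation number}: either the prime ends rotation number $\rho(f,U)$ is nonzero, immediately furnishing a boundary recurrent point with nonzero rotation number, or $\rho(f,U)=0$ forces a second fixed point in $\mathbb{S}^2\setminus U$ and the argument can be iterated on a refined disk. Once $z_2$ is in hand, the estimate of the first paragraph closes the theorem.
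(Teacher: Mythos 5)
Your opening reduction (produce a rotation interval $(\rho^-,\rho^+)$ for a suitable lift, invoke Theorem~\ref{thm: Franks and le calvez}, then apply Theorem~\ref{thm: the order of euler's phi function}(iii) and Lemma~\ref{Lemma: order of phi(n; rho-, rho+)}) and your handling of case iii) by passing to $f^k$ match the paper's strategy. But the argument breaks down in cases i) and ii).

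Case ii) contains a genuine error. You assert that an $n_i$ prime-periodic point has rotation number $p_i/n_i$ with $\gcd(p_i,n_i)=1$. That is false: if $z$ has prime-period $n$ then $\widetilde f^{\,n}(\widetilde z)=T^m(\widetilde z)$ for some $m\in\mathbb Z$, so $\rho(\widetilde f,z)=m/n$, and in lowest terms $p/q$ one only gets $q\mid n$ --- not $q=n$. A period-$3$ orbit with rotation number $0$, or a period-$4$ orbit with rotation number $1/2$, is entirely possible even in the absence of fixed points, so two orbits with different prime-periods may well share the same rotation number and your ``distinct denominators force distinct fractions'' conclusion does not follow. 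The paper's Lemma~\ref{lemma: case 2 implies diffrent rotation number} uses precisely the correct constraint $q\mid n_i$: if all the $O_i$ share rotation number $p/q$ then $q\mid\gcd(n_1,\dots,n_s)=1$, so the common value is an integer; after normalizing the lift to make it $0$, one is forced into the transverse-foliation argument to manufacture a nonzero rotation number. Your write-up skips that step entirely, which is the substance of the lemma.

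Case i) is left as a sketch, and the sketch has two problems. First, the claim that ``every recurrent point has zero lift displacement... would conflict with $f\ne\mathrm{Id}$'' is not established and is not obviously true; there is no a priori obstruction, and the whole point of the paper's Lemma~\ref{lemma: get nonzero rotation number} together with the multi-step reduction (compactifying to $\mathbb S^2$, showing the maximal isotopy has at least three singularities, filling complementary components to obtain an $f$-invariant subannulus $\mathbb A'$ with a leaf joining its two ends, then contrasting a nonzero rotation number from Lemma~\ref{lemma: get nonzero rotation number} with a zero one either from a fixed point inside $\mathbb A'$ or from the prime-ends boundary) is to prove exactly this. Second, your reading of Theorem~\ref{thm: prime end rotation number} is reversed: it says $\rho(f,U)\ne0$ implies \emph{at most one} fixed point outside $U$; the contrapositive (two or more fixed points outside $U$ force $\rho(f,U)=0$) is what the paper uses, and $\rho(f,U)=0$ does not itself ``force a second fixed point,'' nor does $\rho(f,U)\ne0$ directly furnish a recurrent point of the surface with nonzero rotation number, since the prime-end circle is not a subset of the annulus. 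Since case ii) ultimately reduces to the machinery of case i), the proof as written is incomplete at its core.

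One minor remark on case iii): the correspondence between $km$ prime-periodic $f$-orbits and $m$ prime-periodic $f^k$-orbits is $k$-to-one (an $f$-orbit of period $km$ splits into $k$ orbits of $f^k$), not a bijection; this does not affect the asymptotic conclusion, but the wording should be fixed.
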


 We first give the  idea of the proof of Theorem \ref{thm: growth of periodic orbits} and Theorem \ref{thm: growth of n periodic}. If $f$ is of finite order, it is conjugate to a rational rotation of the annulus (see \cite{Bro19, Eil34,Ker36}). The results are easy to prove.
 If $f$ is of infinite order and has a fixed or periodic point, one of the three cases in Theorem \ref{thm: growth of n periodic} happens.
  In case i), we will prove the existence of  different rotation numbers for the homeomorphism  or the induced homeomorphism on a modified annulus, and get the results by the following Lemma \ref{lemma: different rotation implies growth}.
  In case ii), we can also prove the existence of  different rotation numbers in Lemma \ref{lemma: case 2 implies diffrent rotation number}, and get the results by the Lemma \ref{lemma: different rotation implies growth}.
  In case iii),  we consider $f^k$, which satisfies the condition of case i) or ii),   and compare the prime-periods of a periodic point for  $f$ and for $f^k$ respectively.

Now, we begin with the lemmas, and then prove Theorem~\ref{thm: growth of n periodic} and Theorem~\ref{thm: growth of periodic orbits}.

\begin{lemma}\label{lemma: different rotation implies growth}
Let $f$ be a homeomorphism of $\mathbb{A}$ that is isotopic to the identity and satisfies the intersection condition,  and $\widetilde{f}$  one of its lifts to the universal covering space. Suppose that there exist two positive recurrent points $z_{1}$ and $z_{2}$ such that
$-\infty\leq \rho(\widetilde{f}, z_{1})=\rho_1<\rho(\widetilde{f}, z_{2})=\rho_2\leq +\infty$. Then,
  \[\liminf_{n\to+\infty} N_{=n}\frac{\log\log n}{n}>0, \quad \liminf_{n\to+\infty}N_{\le n}\frac{1}{n^2}>0,\quad \liminf_{n\to+\infty}N_{\le n, n_0\nmid} \frac{1}{n^2} >0,\]
where $n_0$ is a given prime number.
\end{lemma}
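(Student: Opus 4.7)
The plan is to combine Theorem~\ref{thm: Franks and le calvez} (which guarantees, under the stated hypothesis, a $q$ prime-periodic interior point with every irreducible rotation number $p/q\in (\rho_1,\rho_2)$) with the counting Lemmas~\ref{Lemma: order of phi(n; rho-, rho+)}, \ref{lemma: order of Phi(n; rho-, rho+)} and \ref{lemma: order of Psi(n; rho-, rho+)} that control the number of irreducible fractions in an interval. The three lower bounds are then literally lower bounds on $\varphi(n;\rho_1,\rho_2)$, $\Phi(n;\rho_1,\rho_2)$ and $\Psi(n;\rho_1,\rho_2)$ respectively.

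First I would reduce to the case where $\rho_1,\rho_2$ are both finite: if either is $\pm\infty$, replace the interval $(\rho_1,\rho_2)$ by any nonempty bounded subinterval $(\rho^-,\rho^+)\subset(\rho_1,\rho_2)$, since Theorem~\ref{thm: Franks and le calvez} continues to give a $q$ prime-periodic interior point for every irreducible $p/q\in(\rho^-,\rho^+)$. From now on assume $-\infty<\rho^-<\rho^+<+\infty$. For each integer $q\ge 1$ and each integer $p$ with $\gcd(p,q)=1$ and $p/q\in(\rho^-,\rho^+)$, Theorem~\ref{thm: Franks and le calvez} produces an interior $q$ prime-periodic point $z_{p/q}$ whose rotation number with respect to $\widetilde{f}$ equals $p/q$. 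The rotation number is constant along an orbit, so two points with distinct rotation numbers lie on distinct orbits; in particular the orbits of $z_{p/q}$ and $z_{p'/q'}$ are distinct whenever $p/q\ne p'/q'$.

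Consequently, for each $n$,
\[
N_{=n}\ \ge\ \varphi(n;\rho^-,\rho^+),\qquad N_{\le n}\ \ge\ \Phi(n;\rho^-,\rho^+),\qquad N_{\le n,\,n_0\nmid}\ \ge\ \Psi(n;\rho^-,\rho^+),
\]
the last inequality using that the $q$ appearing in the definition of $z_{p/q}$ is exactly its prime period, so the restriction $\gcd(q,n_0)=1$ translates directly. Applying Lemma~\ref{Lemma: order of phi(n; rho-, rho+)} together with Theorem~\ref{thm: the order of euler's phi function}(iii) yields
\[
\liminf_{n\to+\infty} N_{=n}\,\frac{\log\log n}{n}\ \ge\ (\rho^+-\rho^-)\,e^{-\gamma}\ >\ 0.
\]
Lemma~\ref{lemma: order of Phi(n; rho-, rho+)} gives $\liminf_n N_{\le n}/n^2\ge 3(\rho^+-\rho^-)/\pi^2>0$, and Lemma~\ref{lemma: order of Psi(n; rho-, rho+)} gives $\liminf_n N_{\le n,\,n_0\nmid}/n^2\ge C_1>0$, proving all three claims.

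There is essentially no hard step here: once Theorem~\ref{thm: Franks and le calvez} is invoked, the statement is purely a counting problem that was already solved in Section~2. The only point requiring minimal care is ensuring the orbits produced are mutually distinct (handled by the rotation-number invariant) and interior (built into the conclusion of Theorem~\ref{thm: Franks and le calvez}). The real work of the paper is instead to \emph{arrange} the hypothesis of this lemma, i.e. to produce two recurrent points with different $\widetilde{f}$-rotation numbers, which is what the remainder of Section~3 is devoted to.
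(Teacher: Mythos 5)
Your proposal is correct and follows essentially the same route as the paper: invoke Theorem~\ref{thm: Franks and le calvez} to produce a $q$ prime-periodic interior orbit for each irreducible fraction $p/q$ in the interval (distinct rotation numbers giving distinct orbits), and then apply Lemma~\ref{Lemma: order of phi(n; rho-, rho+)}, Lemma~\ref{lemma: order of Phi(n; rho-, rho+)} and Lemma~\ref{lemma: order of Psi(n; rho-, rho+)} to the counting functions $\varphi$, $\Phi$, $\Psi$. You are in fact slightly more careful than the paper in explicitly passing to a bounded subinterval $(\rho^-,\rho^+)\subset(\rho_1,\rho_2)$ when $\rho_1$ or $\rho_2$ is infinite, a step the paper leaves implicit.
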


\begin{remark}
The homeomorphism, that preserves a finite area,  satisfies the intersection condition.
\end{remark}

\begin{proof}[Proof of Lemma \ref{lemma: different rotation implies growth}]
By Theorem \ref{thm: Franks and le calvez}, $f$ has an interior $n$ prime-periodic orbit with the rotation number $m/n$ for every irreducible $\frac{m}{n}\in( \rho_1, \rho_2 )$.
   So,
   \[N_{=n}\ge \varphi(n; \rho_1, \rho_2), \quad  N_{\le n}\ge \Phi(n; \rho_1, \rho_2), \quad N_{\le n, n_0\nmid}\ge \Psi(n; \rho_1, \rho_2).\]
   By Theorem \ref{thm: the order of euler's phi function} and Lemma \ref{Lemma: order of phi(n; rho-, rho+)},
   \[\liminf_{n\to +\infty}  N_{=n}\frac{\log\log n}{n}\ge  \liminf_{n\to +\infty} (\rho_2-\rho_1)\varphi(n)\frac{\log\log n}{n}=(\rho_2-\rho_1)e^{-\gamma}>0,\]
   where $\gamma$ is the Euler constant.

By Lemma \ref{lemma: order of Phi(n; rho-, rho+)},
   \[\liminf_{n\to+\infty}N_{\le n} \frac{1}{n^2} \ge \frac{3(\rho_2-\rho_1)}{\pi^2}>0.  \]

   By Lemma \ref{lemma: order of Psi(n; rho-, rho+)},
      \[\liminf_{n\to+\infty}N_{\le n, n_0\nmid} \frac{1}{n^2} >0. \qedhere \]
\end{proof}

\begin{lemma}\label{lemma: get nonzero rotation number}
Let $f$ be a homeomorphism of the  open annulus $\mathbb{A}$ that  preserves a finite area,  $I$ a maximal isotopy of $f$, $\widetilde{f}$ the lift of $f$ to the universal covering space of $\mathbb{A}$ associated with $I$ (i.e., $(\widetilde{f}_t)_{t\in[0,1]}$ is the lift of $I=(f_t)_{t\in[0,1]}$, $\widetilde{f}_0=Id$, $\widetilde{f}_1=\widetilde{f}$), and $\mathcal{F}$ a transverse foliation of $I$.  If there is a leaf of $\mathcal{F}$ from one end of the annulus to the other end of the annulus, then there exists a positive recurrent point with $\rho(\widetilde{f}, z)\ne 0$.
\end{lemma}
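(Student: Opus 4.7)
Let $\phi$ be the leaf of $\mathcal{F}$ going from one end of $\mathbb{A}$ to the other, and let $\widetilde{\mathcal F}$ denote the lifted foliation on $\widetilde{\mathbb A} := \mathbb{R}\times(0,1)$. Since $\phi$ is simply connected, its preimage under $\pi$ decomposes into a disjoint union of proper arcs $\widetilde\phi_k = T^k(\widetilde\phi_0)$, $k\in\mathbb Z$, each joining the two ends of $\widetilde{\mathbb A}$. These arcs are mutually disjoint, hence linearly ordered by the ``left-of'' relation on proper arcs of $\widetilde{\mathbb A}$, and we index them so this order matches $\mathbb{Z}$. Let $D_k$ denote the open strip between $\widetilde\phi_k$ and $\widetilde\phi_{k+1}$, and define a height function $h:\widetilde{\mathbb A}\setminus \pi^{-1}(\phi)\to\mathbb Z$ by $h(\widetilde z)=k$ for $\widetilde z\in D_k$. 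The first step is to prove the monotonicity $h\circ \widetilde f\ge h$: the trajectory $t\mapsto \widetilde f_t(\widetilde z)$ is homotopic rel endpoints to a path positively transverse to $\widetilde{\mathcal F}$, and since every crossing of such a path with any $\widetilde\phi_k$ is from its left to its right, the path cannot leave the closed right half-plane bounded by $\widetilde\phi_{h(\widetilde z)}$. Thus $\delta:=h\circ\widetilde f-h$ is a nonnegative, integer-valued, $T$-invariant function, so it descends to $\bar\delta:\mathbb A\to\mathbb Z_{\ge 0}$.

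The core step is to rule out $\int_{\mathbb A}\bar\delta\, d\mu=0$. Suppose the integral vanishes; then $\bar\delta=0$ almost everywhere, and since $\bar\delta$ is locally constant on the open full-measure set $\widetilde{\mathbb A}\setminus\big(\pi^{-1}(\phi)\cup \widetilde f^{-1}(\pi^{-1}(\phi))\big)$, it vanishes identically there. Using the same monotonicity for $\widetilde f^{-1}$ (in the reverse sense) together with the homeomorphism property, $\widetilde f(D_k)=D_k$ for every $k$, and hence $\widetilde f(\widetilde\phi_k)=\widetilde\phi_k$. Now fix $\widetilde z\in\widetilde\phi_0$. If $\widetilde f(\widetilde z)\ne \widetilde z$, any positively transverse path homotopic to the trajectory starts at $\widetilde z\in\widetilde\phi_0$ and must immediately enter the open right half-plane (by positive transversality at $t=0$); but then it must end at $\widetilde f(\widetilde z)\in\widetilde\phi_0$, which would require approaching $\widetilde\phi_0$ from its left (again by positive transversality at $t=1$), impossible since the path cannot cross $\widetilde\phi_0$ backwards. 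Thus $\widetilde f(\widetilde z)=\widetilde z$, and the trajectory of $z:=\pi(\widetilde z)$ lifts to a loop in the simply connected $\widetilde{\mathbb A}$, making $z$ a contractible fixed point of $f$ associated to $I$. Maximality of $I$ forces $z\in\mathrm{Fix}(I)=\mathrm{Sing}(\mathcal F)$, contradicting $z\in\phi$. Hence $\int_{\mathbb A}\bar\delta\, d\mu>0$.

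Finally, Birkhoff's ergodic theorem gives $\bar\delta^*(z):=\lim_n \frac{1}{n}\sum_{i=0}^{n-1}\bar\delta(f^iz)$ existing $\mu$-a.e.\ with $\int \bar\delta^* d\mu = \int \bar\delta\, d\mu>0$, so $\bar\delta^*>0$ on a set of positive $\mu$-measure. Combined with Poincar\'e recurrence, I can select a positively recurrent $z\notin\phi$ with $\bar\delta^*(z)>0$. For a recurrence subsequence $f^{n_k}(z)\to z$, pick integers $m_k$ so that $\widetilde f^{n_k}(\widetilde z)\to T^{m_k}(\widetilde z)$. Since $T^{m_k}(\widetilde z)$ lies in the interior of the strip $D_{h(\widetilde z)+m_k}$, local constancy of $h$ yields $h(\widetilde f^{n_k}(\widetilde z))-h(\widetilde z)=m_k$ for $k$ large, and telescoping $\sum_{i=0}^{n_k-1}\bar\delta(f^iz)=h(\widetilde f^{n_k}\widetilde z)-h(\widetilde z)=m_k$ combined with the Birkhoff limit gives $m_k/n_k\to\bar\delta^*(z)$. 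Because $p_1(\widetilde f^{n_k}\widetilde z)-p_1(T^{m_k}\widetilde z)\to 0$, we conclude $\rho(\widetilde f, z)=\bar\delta^*(z)>0$, proving the lemma. The hardest part is the ``no-zero'' step: ruling out $\bar\delta\equiv 0$ requires combining the rigidity of positively transverse paths with endpoints on the same lift of $\phi$ with the maximality of $I$ to obtain a contradiction with the regularity of the leaf $\phi$.
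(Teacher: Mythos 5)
Your proof is correct but takes a genuinely different route from the paper. Both arguments hinge on the same geometric fact --- a positively transverse path can cross a lift $\widetilde\phi_k$ of the chosen leaf only from left to right, and the crossing at $\phi$ itself is strict because $I$ is maximal, so $\phi$ carries no contractible fixed point --- but they package it very differently. The paper localizes: it picks a small disk $U$ with a lift $\widetilde U$ just to the left of $\widetilde\Gamma$ and $\widetilde f(\widetilde U)$ to the right, observes that the first-return displacement exceeds $1/2$, invokes Kac's lemma to put the return time $\tau$ in $L^1(U)$, and applies Birkhoff to $\tau_n/n$ to get $\rho(\widetilde f,z)>0$ for a.e.\ $z\in U$. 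You instead work globally with the integer cocycle $\bar\delta=h\circ\widetilde f-h\ge0$, rule out $\int\bar\delta\,d\mu=0$ by showing it would force $\widetilde f(\widetilde\phi_k)=\widetilde\phi_k$ and hence a contractible fixed point on the leaf (contradicting $\mathrm{Fix}(I)=\mathrm{Sing}(\mathcal F)$), and then apply Birkhoff to $\bar\delta$ directly. Your ``ruling out zero'' step makes explicit the role of maximality that the paper leaves implicit in its assertion that $\widetilde f$ sends the right half of $\widetilde\Gamma$ to a proper subset of itself; spelling this out is a genuine improvement in clarity. Two small cautions where the paper's route is tighter: on the non-compact open annulus $\bar\delta$ need not be bounded, hence not obviously in $L^1$, so either truncate to $\min(\bar\delta,N)$ before invoking Birkhoff (its integral is still positive by your non-vanishing argument, and already yields $\liminf_n\frac1n\sum_{i<n}\bar\delta(f^iz)>0$ on a set of positive measure) or use the nonnegative ergodic theorem and allow $\rho=+\infty$, which Theorem~\ref{thm: Franks and le calvez} permits; Kac's lemma is precisely the paper's device for getting integrability for free. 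Also, the leaf $\phi$ need not have $\mu$-measure zero (the area is an arbitrary non-atomic, full-support Borel measure), so ``open full-measure set'' is a slight overstatement --- what you actually need, and have, is that $\phi$ has empty interior so the local-constancy argument applies; and to make the telescoping identity hold along the whole orbit you should first extend $h$ to a single-valued function on all of $\widetilde{\mathbb A}$, say by taking the strips half-open.
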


\begin{proof}
  We choose a leaf $\Gamma$ from one end of the annulus to the other end of the annulus and one of its lifts $\widetilde{\Gamma}$.   Note that the oriented curve $\widetilde{\Gamma}$ separates the covering space into two parts, and  $\widetilde{f}$ maps the  part on the right of $\widetilde{\Gamma}$  to a proper subset of itself.
 We can choose a small disk $U$ near $\Gamma$ and $\widetilde{U}$ the lift of $U$ near $\widetilde{\Gamma}$, such that $\widetilde{U}$ is on the left of $\widetilde{\Gamma}$ and $\widetilde{f}(\widetilde{U})$ is on the right of $\widetilde{\Gamma}$.
  We can also assume that  the diameter of  $p_1(\widetilde{U})$ is smaller than $\displaystyle{\frac{1}{2}}$  by choosing $U$ small enough, where $p_1$ is the projection to the first factor.
 We suppose that
 \[p_1(\widetilde{z'})-p_1(\widetilde{z})>\frac{1}{2} \text{ for all } \widetilde{z}\in \widetilde{U}  \text{ and } \widetilde{z'}\in \widetilde{U}',\]
  where $\widetilde{U}'$ is any lift of $U$ on the right of $\widetilde{\Gamma}$ (the proof in the case $p_1(\widetilde{z'})-p_1(\widetilde{z})<-\frac{1}{2}$ is similar).
  By Poincar\'e's recurrent theorem, almost all points (with respect to the measure induced by the given finite area) in $U$  are positive recurrent by $f$.
   We denote by $\mathrm{Rec}^+(f)$ the set of positive recurrent points of $f$,  by $\Phi$  the first return map for $f$ on  $U\cap \mathrm{Rec}^+(f)$,  and by $\tau$ the first return time.
 Let
 \[m(z)=p_1\circ\widetilde{f}^{\tau (z)}(\widetilde{z})-p_1(\widetilde{z}), \quad  z\in U\cap \mathrm{Rec}^+(f).\]
 Then,
 \[m(z)> \frac{1}{2},\]
  because the positive orbit of $\widetilde{z}$ by $\widetilde{ f}$ is on the right of $\widetilde{\Gamma}$.
  Let
  \[\tau_n(z)=\sum\limits_{i=0}^{n-1}\tau(\Phi^i(z)),\quad \text{ and }
    m_n(z)=\sum\limits_{i=0}^{n-1}m(\Phi^i(z)).\]
  Then,
  \[\frac{m_n(z)}{\tau_n(z)}\to \rho(\widetilde{f},z)\quad \text { as } n\to+\infty, \]
   if the rotation number exists.
 Note that
  \[\frac{m_n(z)}{\tau_n(z)}=\frac{m_n(z)}{n}\frac{n}{\tau_n(z)},\quad  \frac{m_n(z)}{n}>\frac{1}{2}.\]
   By  Kac's Lemma (see \cite{Kac47} and  \cite{Wright}), $\tau\in L^1(U,\mathbb{R})$. Then, $\frac{\tau_n}{n}$ is convergent a.e. by Birkhoff's ergordic theorem.
    Therefore,
    \[\rho(\widetilde{f},z)>0  \quad \text{ for  a.e. } z\in U. \qedhere\]
\end{proof}

\begin{lemma}\label{lemma: case 2 implies diffrent rotation number}
Let $f$ be a homeomorphism of the closed or open annulus, that is isotopic to the identity and preserves a finite area. If $f$ does not have any fixed point, and have  periodic orbits $O_1$, $O_2$, $\cdots$ $O_s$ with prime-periods $n_1, n_2, \cdots, n_s$ such that $\gcd(n_1, n_2,\cdots, n_s)=1$, then there exist two positive recurrent points with different rotation numbers.
\end{lemma}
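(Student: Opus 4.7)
The plan is to argue by contradiction: assume that every positively recurrent point of $f$ has the same rotation number $\rho$ with respect to some fixed lift $\widetilde{f}$. Since each $z_i\in O_i$ is positively recurrent, this forces $\rho=\rho(\widetilde{f},z_i)=m_i/n_i$ for some integer $m_i$, so $\rho\, n_i\in\mathbb{Z}$ for every $i$. Using a B\'ezout identity $\sum_{i=1}^s a_i n_i=1$ (which exists because $\gcd(n_1,\ldots,n_s)=1$), I would conclude that $\rho=\sum_i a_i(\rho\, n_i)\in\mathbb{Z}$. After replacing $\widetilde{f}$ by $T^{-\rho}\circ\widetilde{f}$, I normalize $\rho=0$, so that suitable lifts $\widetilde{z}_i$ of $z_i$ satisfy $\widetilde{f}^{n_i}(\widetilde{z}_i)=\widetilde{z}_i$; in other words, the lift $\widetilde{f}$ acquires non-trivial periodic points in the universal cover.

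The contradiction would then come from Brouwer's plane translation theorem. Because $f$ has no fixed points, neither does $\widetilde{f}$ on the universal cover, and $\widetilde{f}$ is orientation-preserving since $f$ is isotopic to the identity. If at least one $O_i$ lies in the interior of $\mathbb{A}$, then $\widetilde{z}_i$ sits in the $\widetilde{f}$-invariant set $\mathbb{R}\times(0,1)\cong\mathbb{R}^2$, so $\widetilde{f}$ restricts there to a fixed-point-free orientation-preserving homeomorphism of $\mathbb{R}^2$; by Brouwer, such a map has no non-trivial periodic orbit, contradicting the existence of $\widetilde{z}_i$. This settles the open-annulus case automatically, since then $\mathbb{A}$ has no boundary.

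The main technical obstacle is the closed-annulus case in which \emph{every} $O_i$ sits on a boundary circle, because there the Brouwer argument does not apply directly. I would handle it by restricting $f$ to each boundary circle: the restriction is an orientation-preserving homeomorphism of $S^1$ whose rotation number equals $\rho \bmod 1$. For an orientation-preserving circle homeomorphism, the prime period of any periodic orbit equals the denominator of its rotation number written in lowest terms, so every $n_i$ must equal that common denominator. Combined with $\gcd(n_1,\ldots,n_s)=1$, this forces $n_i=1$ for all $i$, producing a fixed point of $f$ and yielding the final contradiction.
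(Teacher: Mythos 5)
Your proof is correct, but it takes a genuinely different route from the paper's. The paper does not argue by contradiction; after reducing to the case where all the $z_i$ have rotation number $0$, it directly \emph{constructs} a positive recurrent point with nonzero rotation number by building a maximal isotopy and a transverse foliation of the interior of the annulus (which, since $f$ is fixed-point free, has no singularities and so consists of leaves running from one end to the other) and then invoking the Kac's-lemma/Birkhoff-ergodic-theorem argument of Lemma~\ref{lemma: get nonzero rotation number}. You instead show that the given orbits $O_i$ cannot all share a rotation number: after normalizing $\rho=0$, Brouwer's plane translation theorem rules out interior periodic points, and Poincar\'e's classification of circle homeomorphisms rules out boundary periodic points of period $\geq 2$. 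Your approach is more elementary (Brouwer plus circle rotation theory, no transverse foliations) and, notably, makes no use of the finite-area hypothesis, which the paper's route does need (via Kac's lemma). The paper likely reuses the foliation machinery because it is needed anyway in case~i) of Theorem~\ref{thm: growth of n periodic}. One small remark: in your boundary-circle step, once $\rho$ is normalized to $0$ the $\gcd$ condition is no longer needed — a circle homeomorphism with rotation number $0$ (for the lift induced by $\widetilde f$) can only have period-$1$ periodic points, so any $O_i$ on a boundary circle is already forced to be a fixed point.
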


\begin{proof}
If there exist $z_i\in O_i$, $z_j\in O_j$ such that $\rho(\widetilde{f},z_i)\ne \rho(\widetilde{f},z_j)$, we finish the proof. Now, we suppose that $\rho(\widetilde{f}, z)=\frac{p}{q}$ for all $z\in O_i$, $i=1,2,\cdots,s$, where $\gcd(p,q)=1$.   Since the prime-period of  $O_i$ is  $n_i$, we have $q| n_i$,  for $i=1,2,\cdots, s$. So, $q=1$, and hence $\rho(\widetilde{f}, z)$ is an integer. By composing a covering transformation of the universal covering space to the lift of $f$ if necessary, we can suppose that $\rho(\widetilde{f}, z)=0$. Now, we will prove the existence of a non-zero rotation number.

We consider the homeomorphism of the open annulus. When $\mathbb{A}$ is closed, we consider the restriction of $f$ to the interior of $\mathbb{A}$. Let $I$  be an identity isotopy  of $f$  such that the lift of $f$ associated with $I$ to the universal covering space is equal to $\widetilde{f}$.
The homeomorphism $f$ does not have any fixed point. So, $I$  does not have any fixed point and is a maximal isotopy. We consider the transverse foliation $\mathcal{F}$ of $I$.  All the leaves of $\mathcal{F}$ are  topological lines from one end of the annulus to the other end.
 We get the result by Lemma~\ref{lemma: get nonzero rotation number}.
\end{proof}

\begin{lemma}\label{lemma: period of fk}
  If the periods of  all the periodic orbits of $f$ has a greatest common divisor $k\ge 2$, then an $n$ prime-periodic point of $f^k$ is a $kn$ prime-periodic point of $f$.
\end{lemma}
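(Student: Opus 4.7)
The plan is to unpack both the hypothesis and the conclusion in terms of divisibility of the prime periods, and then squeeze the prime period of $z$ under $f$ between $kn$ (from above) and $kn$ (from below).

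First I would fix an $n$ prime-periodic point $z$ of $f^k$, and denote by $p$ its prime period under $f$. From $(f^k)^n(z)=z$ we immediately get $f^{kn}(z)=z$, which forces $p\mid kn$.  This is the easy ``upper bound''.

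Next I would use the standing hypothesis that the greatest common divisor of the prime periods of all periodic orbits of $f$ equals $k$.  In particular, every periodic orbit of $f$, including the one through $z$, has its prime period divisible by $k$, so $k\mid p$.  Write $p=k\ell$ with $\ell\in\mathbb{N}$.  Then $f^p(z)=z$ can be rewritten as $(f^k)^{\ell}(z)=z$, so, since $z$ has prime period $n$ for $f^k$, we must have $n\mid\ell$.  Combining $n\mid\ell$ with $p=k\ell\mid kn$ (i.e.\ $\ell\mid n$) gives $\ell=n$, and therefore $p=kn$, which is exactly the claim.

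I do not anticipate a genuine obstacle in this lemma; the only thing to be careful about is distinguishing the \emph{prime} period (minimal period) from any other period, and making sure that the hypothesis on the gcd is applied to the prime period of the specific orbit of $z$ under $f$, for which the gcd condition guarantees divisibility by $k$.  Once that is set up, the rest is a one-line divisibility argument.
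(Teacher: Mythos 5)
Your proof is correct, and it is cleaner than the one in the paper. The paper lets $n'$ be the prime period of $z$ under $f$, observes $n'\mid kn$, proves $\gcd(kn/n',n)=1$ by a contradiction argument, infers $n\mid n'$ and writes $n'=sn$, and then runs a prime-by-prime analysis of the factors of $k$ (invoking $k\mid n'$ at one point) to force $k/s=1$. You instead apply the hypothesis $k\mid p$ at the very start, write $p=k\ell$, and then squeeze: $(f^k)^{\ell}(z)=z$ gives $n\mid\ell$ because $n$ is the $f^k$-prime-period, while $p\mid kn$ gives $\ell\mid n$, so $\ell=n$. Both proofs hinge on the same two facts (the hypothesis gives $k\mid p$, and the prime period of $z$ under $f^k$ is $n$), but your two-sided divisibility argument replaces the paper's gcd lemma and prime-factorization case analysis with a one-line sandwich, which is the more transparent route.
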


\begin{proof}
If $z$ is an $n$ prime-periodic point of $f^k$, then $f^{kn}(z)=z$ and $f^{km}(z)\ne z$ for every positive integer   $m<n$.
So, $z$ is a periodic point of $f$, and the prime-period  $n'| kn$. We will prove $n'=kn$.

We first prove $\gcd(\frac{kn}{n'}, n)=1$. In fact, if $\frac{kn}{n'}$ and $n$ have a common divisor $t$, then $k \frac{n}{t}=\frac{kn/n'}{t} n'$. So, the prime-period  of $z$ for $f^k$ is a divisor of $\frac{n}{t}$. Hence, $t=1$.

Now, we suppose $n'= sn$, where $s$ is a positive integer.
For all prime  $p|k$,
\begin{itemize}
  \item if $p|n$, then $p\nmid \frac{kn}{n'}=\frac{k}{s}$.
  \item if $p\nmid n$, then $p| s$ (because $k| n'$). Moreover, if $p^r|k$,  then $p^r|s$. So, $p\nmid \frac{k}{s}$.
\end{itemize}
Therefore, $\frac{k}{s}=1$, and  $n'=n$.
\end{proof}

\begin{proof}[Proof of Theorem \ref{thm: growth of n periodic}]
  Suppose that $f$ is a homeomorphism of the closed or open annulus, that is isotopic to the identity, preserves a finite area, and is of infinite order. We will prove the theorem in the three cases one by one.
\begin{itemize}
\item[i)] We suppose that $f$ has a fixed point.

By Lemma \ref{lemma: different rotation implies growth}, if $f$ has positive recurrent points with different rotation numbers, we finish the proof.  Now, we suppose that the rotation number $\rho(\widetilde{f}, z)$ is a constant, where $\widetilde{f}$ is a lift of $f$ and $z$ is any positive recurrent point of $f$. Moreover, for a fixed point $z$ of $f$, the rotation number $\rho(\widetilde{f}, z)$ is an integer.  By composing a covering transformation  of the universal covering space to the lift of $f$ if necessary, we can suppose that $\rho(\widetilde{f}, z)=0$ for every positive recurrent point $z$.

 If $\mathbb{A}$ is a closed annulus, we shrink each boundary to a point and get a sphere; while $\mathbb{A}$ is an open annulus, we consider the end  compactifications and also get a sphere. Denote the sphere by $\mathbb{S}^2$, and the two points not in the interior of  $\mathbb{A}$ by $N$ and $S$. Moreover, $f$ induces an area preserving homeomorphism of $\mathbb{S}^2$ that fixes both $N$ and $S$. We still denote by $f$ the induced homeomorphism of $\mathbb{S}^2$

  By Theorem \ref{thm: BCLR}, there is a maximal isotopy $I=(f_t)_{t\in [0,1]}$ that fixes both $N$ and $S$, and the lift  of $f|_{\mathbb{S}^2\setminus \{N,S\}}$ associated with $I$ is $\widetilde{f}$. If $I$ have only two fixed points $N$ and $S$, we consider the transverse foliation $\mathcal{F}$ of $I$. The leaves of $\mathcal{F}$ join the two ends of the annulus.
   We get the existence of non-zero rotation numbers by Lemma \ref{lemma: get nonzero rotation number}, which contradicts with our assumption that $\rho(\widetilde{f}, z)=0$ for all positive recurrent points $z$. Therefore, $I$ has at least $3$ fixed points.

 Now, we choose a leaf $\Gamma$ of the transverse foliation $\mathcal{F}$ of $I$, and will find an invariant topological annulus of $f$ such that $\Gamma$ joins the two ends of the annulus.
  We first choose the connected component $U$ of $\mathbb{S}^2\setminus\mathrm{Fix}(I)$ containing $\Gamma$.  Note that the $\alpha$-limit set $\alpha(\Gamma)$  of $\Gamma$ and the $\omega$-limit set $\omega(\Gamma)$ of $\Gamma$  are connected compact subsets of the set of the singular points $\mathrm{Sing}(\mathcal{F})=\mathrm{Fix}(I)\subset \mathbb{S}^2\setminus U$ of $\mathcal{F}$ respectively.  We  fill $U$ as follows:
 Let $K_1$ and $K_2$ be the connected components of $\mathbb{S}^2\setminus U$ that contain  $\alpha(\Gamma)$  and  $\omega(\Gamma)$  respectively. Then,
 $\mathbb{A}'=\mathbb{S}^2\setminus(K_1\cup K_2)$ is an open set containing $U$, and is homeomorphic to a disk (if $K_1=K_2$) or  an annulus (if $K_1\ne K_2$).
 We consider the induced  homeomorphism, the induced maximal isotopy, and the induced transverse foliation on $\mathbb{A}'$, and still denote them by $f$, $I$, $\mathcal{F}$ respectively.

 \begin{lemma}
$K_1\ne K_2$.
 \end{lemma}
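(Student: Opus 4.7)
\emph{Strategy.} I argue by contradiction: supposing $K_1=K_2=:K$, I will construct an open forward-$f$-invariant region $W\subsetneq\mathbb{S}^2$ whose preimage under $f$ strictly contains $W$, contradicting the fact that $f$ preserves a finite area. The region $W$ will be bounded by the leaf $\Gamma$ and a piece of $K$, and forward invariance will follow from the no-reverse-crossing property of $I$-trajectories across leaves of $\mathcal{F}$.

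\emph{Set-up.} Since $K$ is a compact connected subset of $\mathbb{S}^2$, every connected component of $\mathbb{S}^2\setminus K$ is an open topological disk; inside the one containing $\Gamma$, the closure $\overline{\Gamma}=\Gamma\cup\alpha(\Gamma)\cup\omega(\Gamma)$ has both limit sets in $K$ and becomes a Jordan curve upon collapsing $K$ to a single point. Hence $\Gamma\cup K$ disconnects $\mathbb{S}^2$. Using the orientation of $\Gamma$, let $W$ denote the connected component of $\mathbb{S}^2\setminus(\Gamma\cup K)$ lying immediately to the right of $\Gamma$; then $W\subset U$, and $W$ is simply connected as a complementary component of the connected subset $\Gamma\cup K\subset\mathbb{S}^2$.

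\emph{Forward invariance and contradiction.} I establish $f(W)\subset W$ via two facts: (i) any $I$-trajectory avoids $\mathrm{Fix}(I)$, because $f_t(z)=z_0\in\mathrm{Fix}(I)$ forces $z=f_t^{-1}(z_0)=z_0$; hence trajectories starting in $U$ remain in $U$, and a trajectory in $W$ cannot exit through the boundary piece $K\subset\mathrm{Fix}(I)$; (ii) every $I$-trajectory is homotopic rel endpoints to a path positively transverse to $\mathcal{F}$, so it crosses the leaf $\Gamma$ only from its left to its right side. Combined, any trajectory beginning in $W$ stays in $\overline{W}\cap U=W\cup\Gamma$, and openness of $f(W)$ yields $f(W)\subset W$. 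Area preservation then upgrades this to $f^{-1}(W)=W$ modulo null sets. To contradict this, pick some $z_0\in\Gamma$ with $f(z_0)\in W$: by continuity a small neighborhood $V$ of $z_0$ satisfies $f(V)\subset W$, and since $V$ necessarily meets $\mathbb{S}^2\setminus\overline{W}$ (the left side of $\Gamma$), we obtain points outside $W$ whose $f$-images lie in $W$, giving $f^{-1}(W)\supsetneq W$ strictly — the desired contradiction.

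\emph{Main obstacle.} The delicate step is the very last one: producing a point $z_0\in\Gamma$ with $f(z_0)\in W\setminus\Gamma$, i.e., ruling out the degenerate possibility $f(\Gamma)\subset\Gamma$. If one had $f(\Gamma)\subset\Gamma$, then $f|_\Gamma$ would be a monotone homeomorphism of $\Gamma\cong\mathbb{R}$; any fixed point of $f|_\Gamma$ would be a fixed point of $f$ in $U$ whose $I$-trajectory, being a loop in the simply connected region $W\cup\Gamma$, would be null-homotopic there, producing a contractible fixed point of $f$ in $U$ and contradicting the maximality of $I$; and if $f|_\Gamma$ were fixed-point-free, a consistent positive (or negative) shift along $\Gamma$ would transport area out of any small collar of $\Gamma$ towards $\omega(\Gamma)\subset K$ (respectively $\alpha(\Gamma)\subset K$), contradicting Poincar\'e recurrence for the area-preserving $f$. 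All the other parts of the argument are essentially routine manipulations of positive transversality and the local structure of the transverse foliation near a regular leaf.
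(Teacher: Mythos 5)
Your argument is correct and is, in essence, the paper's proof unpacked. The paper collapses $K_1=K_2=:K$ to a single point $\star$ to obtain a sphere on which $\Gamma$ becomes a leaf from $\star$ to itself, and then cites the general fact that a transverse foliation of a maximal isotopy of an area-preserving sphere homeomorphism has no leaf from a singularity back to itself; your $W$ is exactly one of the two complementary disks of $\Gamma\cup\{\star\}$ (before collapsing), and your forward-invariance/strict-preimage argument is precisely the proof of that cited fact.

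Two small corrections. First, the inclusion $W\subset U$ is false in general: $W$ is a component of $\mathbb{S}^2\setminus(\Gamma\cup K)$, while $U$ is a component of $\mathbb{S}^2\setminus\mathrm{Fix}(I)$, and $\mathrm{Fix}(I)$ may have components other than $K$ lying inside $W$. The inclusion you actually need, and the one that is true, is $W\subset\mathbb{A}'=\mathbb{S}^2\setminus K$; this is what guarantees $\partial W\subset\Gamma\cup K$, and your forward-invariance argument only uses that. Second, the ``main obstacle'' you describe is not an obstacle at all, and the whole last paragraph can be deleted: for any $z_0\in\Gamma$, since $\Gamma\subset U$ we have $z_0\notin\mathrm{Fix}(I)$, and the positively transverse path $\gamma$ from $z_0$ to $f(z_0)$ is a genuine (non-constant) transverse path in $\mathbb{S}^2\setminus\mathrm{Fix}(I)$. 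It enters $W$ immediately for small $t>0$, never crosses $\Gamma$ right-to-left and never meets $K$, and cannot terminate on $\Gamma$ (ending on $\Gamma$ would force a right-to-left crossing at $t=1$); hence $f(z_0)=\gamma(1)\in W$. So $f(\Gamma)\subset W$ automatically, any $z_0\in\Gamma$ does the job, and in particular $f|_\Gamma$ has no fixed point, so the contractible-fixed-point and Poincar\'e-recurrence digressions are unnecessary (the recurrence one as written is also quite informal).
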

 \begin{proof}
 We will prove the lemma by contradiction. Suppose that $K_1=K_2$. Then, $\mathbb{A}'=\mathbb{S}^2\setminus(K_1\cup K_2)$ is a topological disk.
 By considering the one point compactification of $\mathbb{A}'$, we get a sphere $\mathbb{A}'\cup \{\star\}$, the induced area-preserving homeomorphism $f$, the induced maximal isotopy $I$, and the induced transverse foliation $\mathcal{F}$ on the sphere $\mathbb{A}'\cup \{\star\}$.
 But the leaf $\Gamma$ is from $\star$ to $\star$, which contradicts with the area preserving condition.
 \end{proof}

So,  $\mathbb{A}'=\mathbb{S}^2\setminus(K_1\cup K_2)$ is a topological open annulus and $\Gamma$ is a leaf joining two ends of $\mathbb{A}'$.
Let $\widehat{f}$ be the lift of $f$ to the universal covering space of $\mathbb{A}'$ associated with $I$.
By Lemma \ref{lemma: get nonzero rotation number}, there is a positive recurrent point $z$ with $\rho(\widehat{f},z)\ne 0$.   If $I$ has a fixed point $z'\in \mathbb{A}'$, then $\rho(\widehat{f},z')=0$. We get different rotation numbers, and can finish the proof by Lemma \ref{lemma: different rotation implies growth}.

Now, we suppose that $I$ does not have any fixed point in $\mathbb{A}'$. Since the original maximal isotopy on the sphere $\mathbb{S}^2$ has at least $3$ fixed points, either $K_1$ or $K_2$ contains at least two fixed points of $I$.  We suppose that $K_1$ contains at least two fixed points of $I$, which are also fixed points of $f$.
We consider the prime ends compactification of $\mathbb{S}^2\setminus K_1$,  and extend $f$ to the closed disk $(\mathbb{S}^2\setminus K_1)\sqcup \mathbb{S}^1$. By Theorem~\ref{thm: prime end rotation number}, the prime ends rotation number of $f$ on the boundary is  equal to $0\in\mathbb{R}/\mathbb{Z}$.  Indeed, we get an annulus $\mathbb{A}'\sqcup \mathbb{S}^1$ with one  boundary and the induced homeomorphism  (still denoted  by $f$), such that $\rho(\widehat{f}, z)$ is equal to an integer, for all $z\in \mathbb{S}^1$.
Recall that the boundary of $\mathbb{A}'$ ( $\subset \mathbb{S}^2$) is a subset of $\mathrm{Fix}(I)$, where $I=(f_t)_{t\in [0,1]}$ and $f_0=\mathrm{Id}$.
We can also extend $f_t|_{\mathbb{S}^2\setminus K_1}$ to $\mathbb{S}^1$, and  consider the prime ends rotation number of $f_t$ on the boundary of $\mathbb{S}^2\setminus K_1$. We get the induced $I=(f_t)_{t\in[0,1]}$ on $\mathbb{A}'\sqcup \mathbb{S}^1$. Moreover, the rotation number $\rho(\widehat{f}_t, z)$ is equal to an integer, for all $z\in \mathbb{S}^1$. Because $\rho(\widehat{f}_t, z)$ is continuous for $t$, and $\widehat{f}_0=\mathrm{Id}$,   we have  $\rho(\widehat{f}_t, z)=0$ for $z\in\mathbb{S}^1$ and $t\in [0,1]$. In particular, $\rho(\widehat{f}, z)=0$ for $z\in\mathbb{S}^1$.
By Lemma \ref{lemma: get nonzero rotation number}, there is a positive recurrent point $z'$ with $\rho(\widehat{f},z')\ne 0$. We get different rotation numbers, and can finish the proof by Lemma \ref{lemma: different rotation implies growth}.

 \item[ii)]  Suppose that $f$ does not have any fixed point, and have  periodic orbits with prime-periods $n_1$, $n_2$, $\cdots$, $n_s$ such that $\gcd(n_1, n_2,\cdots, n_s)=1$.
 In this case, the theorem is a corollary of Lemma~\ref{lemma: different rotation implies growth} and Lemma~\ref{lemma: case 2 implies diffrent rotation number}.

 \item[iii)]Suppose that the periods of  all the periodic orbits of $f$ have a greatest common divisor $k\ge 2$.
  Let $g=f^k$. Then, $g$ is  a homeomorphism of the closed or open annulus, that is isotopic to the identity, preserves a finite area, and is of infinite order.

If $g$ has a fix point, the result is a corollary of   case i)  and Lemma~\ref{lemma: period of fk}.

If $g$ does not have any fixed point, then it has periodic orbits with different prime-periods and the greatest common divisor  of the periods of  all the periodic orbits of $g$ is equal to $1$. So, $g$ satisfies the condition of  case ii). We get the result as a corollary of case ii) and Lemma~\ref{lemma: period of fk}.\qedhere
 \end{itemize}
\end{proof}

\begin{proof}[Proof of Theorem \ref{thm: growth of periodic orbits}]
If  $f$ is of finite order, there exists a positive integer $m$ such that  $f^m=\mathrm{Id}$. Moreover, $f$ is conjugate to a rational rotation of the annulus (see \cite{Bro19, Eil34,Ker36}).
By choosing the minimal $m$, we can suppose that every point is  an $m$ prime-period point.
  Then, $N_{\le n}=+\infty$ for all $n\ge m$.
  If $f$ has a fixed or periodic point with prime-periodic $k$ such that $(k, n_0)=1$, then $k=m$, and hence $(m, n_0)=1$,
   $N_{\le n, n_0\nmid}=+\infty$ for all $n\ge m$.  The third condition, that $f$ has a $k$ prime-periodic point and $f^k\ne \mathrm{Id}$ cannot happen. We have nothing to prove.

Now, we suppose that $f$ is of infinite order and has a fixed or periodic point. Then, one of the three cases in Theorem \ref{thm: growth of n periodic} happens. We will prove the theorem in the three cases one by one.

\begin{itemize}
 \item[i)]Suppose that $f$ has a fixed point  (the case i) of Theorem \ref{thm: growth of n periodic}).
     We have in fact proved the existence of two different rotation numbers for the homeomorphism in the annulus or in a modified annulus in the proof of the case i) of Theorem \ref{thm: growth of n periodic}. Then, we get the results by Lemma~\ref{lemma: different rotation implies growth}.

\item[ii)] Suppose that $f$ does not have any fixed point, and has  periodic orbits with prime-periods $n_1, n_2, \cdots, n_s$ such that $\gcd(n_1, n_2,\cdots, n_s)=1$ (the case ii) of Theorem \ref{thm: growth of n periodic}).
We  proved the existence of two different rotation numbers in Lemma \ref{lemma: case 2 implies diffrent rotation number}. Then, we get the results by Lemma~\ref{lemma: different rotation implies growth}.

\item[iii)]
Suppose that the periods of  all the periodic orbits have a greatest common divisor $k'\ge 2$ (the case iii) of Theorem \ref{thm: growth of n periodic}).
 We have in fact proved $N_{=k'n}\ge \varphi(n; \rho^-,\rho^+)$ for some $\rho^-<\rho^+$ in the proof of the case iii) of Theorem \ref{thm: growth of n periodic}. Therefore,
  \[N_{\le n}=\sum_{j=1}^{\lfloor\frac{n}{k'}\rfloor} N_{=k'j}\ge \Phi(\lfloor\frac{n}{k'}\rfloor; \rho^-,\rho^+), \]
where $\lfloor\frac{n}{k'}\rfloor$ is the maximal integer not exceeding $\frac{n}{k'}$.
We get the first result by Lemma \ref{lemma: order of Phi(n; rho-, rho+)}.

 Moreover, if $f$ has a periodic orbit with the prime-period $k$ with $(k, n_0)=1$, then  $(k', n_0)=1$. So, $(k'j, n_0)=1 \Leftrightarrow (j,n_0)=1$.
\[N_{\le n, n_0\nmid }=\sum_{\begin{array}{l} j=1,2,\cdots,\lfloor\frac{n}{k'}\rfloor ;\\
 \gcd (j, n_0)=1\end{array}} N_{=k'j}\ge \Psi(\lfloor \frac{n}{k'}\rfloor ; \rho^-, \rho^+).\]
We get the second result by Lemma \ref{lemma: order of Psi(n; rho-, rho+)}.

The third result is obviously, because $k'| k$. \qedhere
\end{itemize}
\end{proof}

\section[Non-contractible closed geodesics on Riemannian or Finsler $\mathbb{R}P^2$]{\texorpdfstring{Non-contractible closed geodesics on Riemannian or Finsler $\mathbb{R}P^2$}{Non-contractible closed geodesics on Riemannian or Finsler RP2}}
In this section, we will use Theorem \ref{thm: growth of periodic orbits} to
study the multiplicity and also the growth rate of the number of non-contractible closed geodesics on Riemannian or Finsler $\mathbb{R}P^2$.
Firstly, we review some background on this topic.

A closed curve on a Finsler manifold is a closed geodesic if it is locally the shortest path connecting any two nearby points on this curve. As usual, on any Finsler manifold $(M, F)$, a closed geodesic $c: \mathbb{S}^1=\mathbb{R}/ \mathbb{Z} \to M$ is \emph{prime} if it is not a multiple covering (i.e., iteration) of any other closed geodesics.
Here, the $m$-th iteration $c^m$ of $c$ is defined by $c^m(t)=c(mt)$.  The inverse curve $c^{-1}$ of $c$ is defined by $c^{-1}(t)=c(1-t)$ for $t\in \mathbb{R}$.  Note that unlike Riemannian manifold, the inverse curve $c^{-1}$ of a closed geodesic $c$ on an irreversible Finsler manifold need not be a geodesic.
We call two prime closed geodesics $c$ and $d$ {\it distinct} if there is no $\theta\in (0,1)$ such that $c(t)=d(t+\theta)$ for all $t\in\mathbb{R}$. For a closed geodesic $c$ on $(M,\,F)$, we denote by $P_c$ the linearized Poincar\'{e} map of $c$. Recall that a Finsler metric $F$ is \emph{bumpy} if all the closed geodesics on $(M, \,F)$ are non-degenerate, i.e., $1\notin \sigma(P_c)$ for any closed geodesic $c$. Following Rademacher in \cite{Rad2004}, we define  the reversibility $\lambda=\lambda(M,\,F)$ of a compact Finsler manifold $(M,\,F)$ to be
\[\lambda:=\max\{F(-X)\,|\,X\in TM, \,F(X)=1\}\ge 1.\]

There is a famous conjecture in Riemannian geometry which claims that there exist infinitely many closed geodesics on any compact Riemannian manifold. This conjecture has been proved except for CROSS's (compact rank one symmetric spaces); cf. \cite{GM1969JDG} and \cite{VS1976}. The results of Franks \cite{Franks92} in 1992 and Bangert \cite{Bangert1993} in 1993 imply that this conjecture is true for any Riemannian 2-sphere (cf. also  Hingston \cite{Hingston1993}).

But once one moves to the Finsler case, the conjecture becomes false. In 1973, Katok \cite{Katok1973} endowed some irreversible Finsler metrics to the compact rank one symmetric spaces
\[\mathbb{S}^{n},\ \mathbb{R}P^{n},\ \mathbb{C}P^{n},\ \mathbb{H}P^{n}\ \text{and}\ {\rm CaP}^{2},  \label{mflds}\]
such that each of the spaces possesses only finitely many distinct prime closed geodesics (cf. also Ziller \cite{Ziller1982}).
In particular, the number of closed geodesics on $\mathbb{S}^{n}$ and $\mathbb{R}P^{n}$ with Katok's metrics is equal to $2[\frac{n+1}{2}]$.

In 2004,  Bangert and Long \cite{BL2010} (published in 2010) proved the existence of at least two distinct closed geodesics on every Finsler $\mathbb{S}^2$. Since then, there are many interesting results about the multiplicity of closed geodesics on Finsler spheres and compact simply-connected Finsler manifolds; cf. \cite{DLW} and the references therein.

As for the multiplicity of closed geodesics on non-simply connected manifolds whose free loop space possesses bounded Betti number sequence, Ballman et al. \cite{BTZ1} proved in 1981 that every Riemannian manifold, with the fundamental group being a nontrivial finitely cyclic group and possessing a generic metric, has infinitely many distinct closed geodesics. In 1984, Bangert and Hingston \cite{BH} proved that any Riemannian manifold, with the fundamental group being an infinite cyclic group, has infinitely many distinct closed geodesics. Since then, there seem to be very few works on the multiplicity of closed geodesics on non-simply connected manifolds. The main reason is that the topological structures of the free loop spaces on these manifolds are not well known, so that the classical Morse theory is hardly applicable.

Recently, there are some studies on the multiplicity of non-contractible closed geodesics on Finsler $\mathbb{R}P^n$, in particular, Xiao and Long \cite{XL2015} in 2015 investigated the topological structure of the non-contractible loop space and established the resonance identity for the non-contractible closed geodesics on $\mathbb{R} P^{2n+1}$ by use of $\mathbb{Z}_2$ coefficient homology.
 As an application, Duan, Long and Xiao \cite{DLX2015} proved the existence of at least two distinct non-contractible closed geodesics on $\mathbb{R} P^{3}$ endowed with a bumpy and irreversible Finsler metric.  Subsequently in \cite{Tai2016}, Taimanov used a quite different method from \cite{XL2015} to compute the rational equivariant cohomology of the non-contractible loop spaces in compact space forms $\mathbb{S}^n/ \Gamma$, and proved the existence of at least two distinct non-contractible closed geodesics on $\mathbb{R}P^2$ endowed with a bumpy irreversible Finsler metric, where $\Gamma$ is a finite group which acts freely and isometrically on the $\mathbb{S}^n$.
 Note that the only non-trivial group which acts freely on $\mathbb{S}^{2n}$ is $\mathbb{Z}_2$ and that $\mathbb{S}^{2n}/ \mathbb{Z}_2=\mathbb{R}P^{2n}$ (cf. P.5 of \cite{Tai2016}).
 Motivated by Taimanov \cite{Tai2016}, Liu \cite{Liu} proved that there exist at least $2[\frac{n+1}{2}]$ prime non-contractible closed geodesics on every bumpy and irreversible Finsler $(\mathbb{R}P^n,F)$ with reversibility $\lambda$ and flag curvature $K$ satisfying $\frac{64}{25}\left(\frac{\lambda}{1+\lambda}\right)^2<K\le 1$.
 In \cite{LX}, Liu and Xiao established the resonance identity for the non-contractible closed geodesics on $\mathbb{R}P^n$, and together with \cite{DLX2015} and \cite{Tai2016} proved the existence of at least two distinct non-contractible closed geodesics on every bumpy $\mathbb{R}P^n$ with $n\geq2$.
Recently, Rademacher and Taimanov~\cite{RT22} studied the multiplicity of non-contractible closed geodesics on  compact Riemannian or Finsler manifold with infinite fundamental group.

On the other hand, Liu, Wang and Yan \cite{LWY} proved some refinements of Franks' theorem and gave some applications
on the two or infinity results about the closed Reeb orbits with symmetries in contact geometry:

\begin{lemma} \cite[Theorem 11]{LWY}\label{lm: 1}
Let $(\mathbb{S}^3, \lambda, \xi_0)$ be a dynamically convex tight three-sphere satisfying $g_{p,1}^*\lambda = \lambda$ for some $p \geq1$, where $\xi_0$ is the standard contact structure and $g_{p,1} : \mathbb{S}^{3} \rightarrow \mathbb{S}^{3}$ is defined by
\[g_{p,1}(z_1=x_1+iy_1, z_2=x_2+iy_2) = (e^{2\pi i/p}z_1, e^{2\pi i/p}z_2),\]
via the identification $\mathbb{C}^2=\mathbb{R}^4$. Then there exist two or infinitely many $g_{p,1}$-symmetric periodic orbits on $\mathbb{S}^{3}$.
\end{lemma}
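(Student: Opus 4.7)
The plan is to combine Hofer--Wysocki--Zehnder's disk-like global surface of section for dynamically convex tight $\mathbb{S}^3$ with the refined Franks-type theorem for symmetric area-preserving annulus homeomorphisms established in \cite{LWY}.

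First, I would invoke the HWZ theorem: since $(\mathbb{S}^3, \lambda, \xi_0)$ is dynamically convex and tight, the Reeb flow admits an unknotted closed orbit $\gamma_0$ of Conley--Zehnder index $3$ bounding a disk-like global surface of section $D$. The first return map $\phi\colon \mathrm{int}(D) \to \mathrm{int}(D)$ extends to an area-preserving homeomorphism of the open disk (equivalently, of the open annulus obtained by blowing up $\gamma_0$); the closed Reeb orbits of $(\mathbb{S}^3, \lambda)$ other than $\gamma_0$ are in natural bijection with the periodic orbits of $\phi$ in $\mathrm{int}(D)$.

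Second, I would upgrade the section to a $g_{p,1}$-invariant one. Since $g_{p,1}^\ast \lambda = \lambda$, the Reeb flow commutes with $g_{p,1}$ and the set of HWZ disk-like sections is permuted by $g_{p,1}$. Working with a $g_{p,1}$-invariant admissible almost complex structure on the symplectization and using equivariant transversality, one can select a $g_{p,1}$-symmetric binding $\gamma_0$ and a $g_{p,1}$-invariant disk $D$. Then $g := g_{p,1}|_D$ is an area-preserving homeomorphism of the annulus commuting with $\phi$. Next I would match symmetries on the two sides: a closed Reeb orbit distinct from $\gamma_0$ is $g_{p,1}$-symmetric if and only if the corresponding periodic orbit of $\phi$ is invariant under $g$ as a set, while $\gamma_0$ itself is $g_{p,1}$-symmetric by construction. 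Thus producing two or infinitely many $g_{p,1}$-symmetric Reeb orbits on $\mathbb{S}^3$ reduces to producing one or infinitely many $g$-invariant periodic orbits of $\phi$ on the annulus.

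Finally, I would apply the symmetric Franks-type theorem from \cite{LWY}: since $\phi$ commutes with the finite-order area-preserving symmetry $g$ and the binding provides an initial $g$-related periodic orbit structure, the refined theorem yields either the minimal two or infinitely many $g$-invariant periodic orbits of $\phi$, which is the asserted dichotomy pulled back to $\mathbb{S}^3$. The main obstacle I expect is the equivariant choice of section---proving that the HWZ disk can be taken $g_{p,1}$-invariant via an equivariant refinement of the pseudoholomorphic curve construction, or sidestepping this through a direct $\mathbb{Z}/p$-averaging argument---together with the careful bookkeeping required to translate cyclic shifts of $\phi$-periodic orbits under $g$ into the correct notion of $g_{p,1}$-symmetry for closed Reeb orbits.
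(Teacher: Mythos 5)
Your plan has a genuine gap at its central step: a disk-like global surface of section $D$ for the Reeb flow on $(\mathbb{S}^3,\lambda)$ cannot be taken $g_{p,1}$-invariant when $p\ge 2$. The map $g_{p,1}$ acts freely on $\mathbb{S}^3$ (a fixed point would force $z_1=z_2=0$), whereas if $g_{p,1}(D)=D$ then $g_{p,1}|_D$ would be a finite-order, orientation-preserving homeomorphism of a closed disk preserving the boundary circle, and any such map has an interior fixed point (it is topologically conjugate to a rotation of the disk). This contradiction means there is no $g_{p,1}$-invariant disk $D$, so the composite $g:=g_{p,1}|_D$ is not a self-map of $D$, the commutation $g\circ\phi=\phi\circ g$ is not even stated on a common domain, and the translation "$g_{p,1}$-symmetric Reeb orbit $\Leftrightarrow$ $g$-invariant periodic orbit of $\phi$" cannot be set up. No amount of equivariant transversality or averaging will fix this, since the obstruction is topological, not analytic.

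The route actually taken in \cite{LWY} (and recalled in the proof of Theorem~\ref{thm: application 2} here) avoids an invariant section entirely. One takes a $g_{p,1}$-symmetric binding $\gamma_0$ and a disk-like section $D$, observes that $D, g_{p,1}(D),\dots, g_{p,1}^{p-1}(D)$ are distinct, and defines a Poincar\'e $\tfrac1p$-return map $\psi\colon D\to g_{p,1}(D)$; the composite $f=g_{p,1}^{-1}\circ\psi$ is then an area-preserving annulus homeomorphism isotopic to the identity. Under this dictionary, $g_{p,1}$-symmetric Reeb orbits other than $\gamma_0$ correspond to interior periodic orbits of $f$ whose prime-periods are coprime to $p$, and the relevant refinement of Franks' theorem in \cite{LWY} is precisely about periodic orbits with periods relatively prime to a given $n_0$ (not about orbits invariant under a commuting symmetry). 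The item you label a bookkeeping step — "translating cyclic shifts under $g$ into $g_{p,1}$-symmetry" — is in fact where the whole construction lives, and the $\tfrac1p$-return map, not an invariant section, is the device that accomplishes it.
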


This result has a counterpart on the closed geodesic problem, i.e., there exist either two or infinitely many closed geodesics on Finsler $\mathbb{S}^2$ with $K \geq 1$ for which every geodesic loop is longer than $\pi$ (cf. \cite{HaP1}). For every Riemannian $\mathbb{S}^2$, as mentioned above, Franks \cite{Franks92} and Bangert \cite{Bangert1993} proved that there exist infinitely many closed geodesics, and Hingston \cite{Hingston1993} further proved that the number of closed geodesics of length $\leq l$ grows at least like the prime numbers.

Motivated by the above results, we can use Theorem \ref{thm: growth of periodic orbits} to prove a growth rate about the non-contractible closed geodesics on Riemannian $\mathbb{R}P^2$  under some natural assumptions, i.e., Theorem \ref{thm: application 1}.

We first give the idea of the proof of Theorem \ref{thm: application 1}. We consider the universal $2$-cover $(\mathbb{S}^2, g)$ of $(\mathbb{R}P^2, g)$, and the geodesic flow on $T\mathbb{S}^2$.  We restrict the geodesic flow on the unit tangent bundle $S_g\mathbb{S}^2$, and will get a global surface of section $\Sigma$, which is homeomorphic to the closed annulus.  Then, by composing the Poincar\'{e} half-return map with an involution $h_*$, we get an area-preserving homeomorphism $f$ of $\Sigma$ that is isotopic to the identity, and can extend it continuously to the boundary such that  $f|_{\partial \Sigma}=\mathrm{Id}$. Moreover, the odd periodic orbits of $f$ corresponds to non-contractible closed geodesics of $(\mathbb{R}P^2, g)$. Then, we get the result by  Theorem \ref{thm: growth of periodic orbits}.

\begin{proof}[Proof of Theorem \ref{thm: application 1}]
We consider the universal $2$-cover of $(\mathbb{R}P^2, g)$ and the induced Riemannian metric,  i.e. $(\mathbb{S}^2, g)$.
Let $h$ be the generator of $\pi_1(\mathbb{R}P^2)=\mathbb{Z}_2$.
 Then $h$  induces an involutive orientation-reversing isometry on $(\mathbb{S}^2,g)$, that is still denoted by $h$ and acts on $(\mathbb{S}^2,g)$ freely.

 We first define the geodesic flow on $T\mathbb{S}^2$. The tangent space $T\mathbb{S}^2$ inherits a Riemannian metric as follows (see also \cite[Definition 1.9.12]{Kli95}): for any $X=(q,v)\in T\mathbb{S}^2$, we can split $T_X T\mathbb{S}^2$ into the horizontal subspace $T_{Xh}T\mathbb{S}^2$ and  the vertical subspace $T_{Xv}T\mathbb{S}^2$,
  both are canonically identified with $T_q\mathbb{S}^2$, then we define the Riemannian metric on $T_XT\mathbb{S}^2$ by letting the horizontal space and the vertical space be orthogonal and taking on each of these spaces the Riemannian metric by the canonical identification.
 Let $\omega$ be the standard symplectic form on the tangent bundle $T \mathbb{S}^2$, that is defined by
\begin{equation}\label{eq: symplectic form}
  \omega(\xi,\eta)=g(\xi_h,\eta_v)-g(\eta_h,\xi_v),
\end{equation}
 where  $\xi=(\xi_h,\xi_v)$ and $\eta=(\eta_h,\eta_v)$ are the decompositions such that $\xi_h, \eta_h\in T_q\mathbb{S}^2\cong T_{Xh}T\mathbb{S}^2$ and $\xi_v,\eta_v\in T_{q}\mathbb{S}^2\cong T_{Xv}T\mathbb{S}^2$ (see \cite[Proposition 3.1.14]{Kli95} for details).
  We consider the energy
\begin{eqnarray*}
 H: & T \mathbb{S}^2  &\rightarrow \mathbb{R},\\
   & X  &\mapsto  \frac{1}{2} g(X,X),
\end{eqnarray*}
and the Hamiltonian vector field $\zeta_H\in TT \mathbb{S}^2$ that is defined by
\begin{equation}\label{eq: hamiltonian vector field}
 DH=\omega(\zeta_H,\cdot).
 \end{equation}
Then its flow $\phi^t: T\mathbb{S}^2\to T\mathbb{S}^2$ is called the geodesic flow on $T \mathbb{S}^2$.  The projection of the flow line $\phi^t(X)$ for $X\in TM$ to $S^2$ is the geodesic $\gamma$ on $\mathbb{S}^2$ determined by $\dot{\gamma}(0)=X$ and $\dot{\gamma}(t)=\phi^t(X)$ for all $t$ (cf. \cite[Proposition~3.1.13]{Kli95}).

Now, we will verify that $h_{**}(\zeta_H|_{X})=\zeta_H|_{h_*X}$ for every $X\in T\mathbb{S}^2$, and hence $h_*$ commutes with  the geodesic flow $\phi^t$ on $T \mathbb{S}^2$, i.e.,
\begin{equation}\label{eq: comutative}
  h_*\circ\phi^t=\phi^t\circ h_*,
\end{equation}
where $h_*: T\mathbb{S}^2 \to T\mathbb{S}^2$ is the pushforward  of the isometry $h: \mathbb{S}^2\to \mathbb{S}^2$, and $h_{**}: TT\mathbb{S}^2\to TT\mathbb{S}^2$ is the pushforward of $h_*$.
 Since $h$ is an isometry on $(\mathbb{S}^2,g)$, we have that $H(h_* X)=H(X)$  for any $X=(q,v)\in T_q \mathbb{S}^2$. By differentiating it with respect to $X$, we obtain
 \[DH_{h_* X}(h_{**} \eta)=DH_{X}(\eta), \quad \forall \eta\in T_X T\mathbb{S}^2. \]
Then, by (\ref{eq: hamiltonian vector field})
we obtain
\[\omega(\zeta_H|_{h_*X}, h_{**}\eta)=\omega(\zeta_H|_X, \eta).\]
On the other hand, since $h_*(q,v)=(h(q),h_* v)$,  we have
\[h_{**}(\eta_h,\eta_v)=(h_*\eta_h, h_* \eta_v) \quad \forall \eta\in T_X T\mathbb{S}^2.\]
 Recall that $h$ is an isometry, by (\ref{eq: symplectic form}) we get
\[\omega(h_{**}(\zeta_H|_X), h_{**}\eta)=\omega(\zeta_H|_X, \eta).\]
 So,
\[
 \omega(h_{**}(\zeta_H|_X), h_{**}\eta)=\omega(\zeta_H|_{h_*X}, h_{**}\eta),\quad  \forall \eta\in T_XT \mathbb{S}^2,
\]
which implies $h_{**}(\zeta_H|_X)=\zeta_H|_{h_*X}$.

It is not hard to prove that $H(\phi^t (X))\equiv H(X)$, for $t\in\mathbb{R}$ (see \cite[Proposition 3.1.16]{Kli95}).
So, we can restrict the geodesic flow $\phi^t$ to the unit tangent bundle $S_g\mathbb{S}^2$ of $(\mathbb{S}^2, g)$.
We will give a global surface of section $\Sigma$ for the restricted geodesic flow in the following paragraph.

We know that there is at least one non-contractible closed geodesic on $(\mathbb{R}P^2, g)$ (cf. \cite[Theorem 2.4.19]{Kli95}).
Let $c$ be the non-contractible closed geodesic on $(\mathbb{R}P^2,g)$ which has minimal length among all the non-contractible curves on $(\mathbb{R}P^2,g)$.
Then, the lift of $c$ to  $(\mathbb{S}^2,g)$ is a closed geodesic. We denote it by $\varsigma=c^2$. Moreover, $\varsigma$ is a simple closed geodesic on $(\mathbb{S}^2,g)$ that separates $(\mathbb{S}^2,g)$ into two disks.
Otherwise, $\varsigma$ will have self-intersection and then the projection of $\varsigma$ onto $(\mathbb{R}P^2,g)$ will not be  length-minimum among all the non-contractible curves on $(\mathbb{R}P^2,g)$.
Suppose that $\varsigma$ is parametrized by arclength and of length $\delta$. Denote by $\mathbb{S}^1_{\delta}=\mathbb{R}/\delta\mathbb{Z}$ the circle of length $\delta$.
Then,
\[ \Sigma :=\{(q, v)\in S_g \mathbb{S}^2 \mid q \in \varsigma(\mathbb{S}^1_{\delta}), 0 \leq \angle (\dot{\varsigma}|_q, v)\leq \pi\} \backsimeq \mathbb{S}^1\times [0,\pi],\]
is an embedded surface in the unit tangent bundle $S_g \mathbb{S}^2$ with boundary $\dot{\varsigma}(\mathbb{S}_{\delta}^{1})\sqcup \dot{\overline{\varsigma}}(\mathbb{S}_{\delta}^{1})$, where $ \angle (\dot{\varsigma}|_q, v)$ is the oriented angle from $\dot{\varsigma}|_q$ to $v$ and $\overline{\varsigma}(t):=\varsigma(-t)$.  The Gaussian curvature of $(\mathbb{S}^2,g)$ is positive by the assumption that $(\mathbb{R}P^2,g)$ has positive Gaussian curvature.
Then by Theorems 3.10.2 and 3.10.4 of \cite{Kli95},  $\Sigma $ is a global surface of section. Furthermore,
\[ h_*(\Sigma) =\{(q, v)\in S_g \mathbb{S}^2 \mid q \in \varsigma(\mathbb{S}_{\delta}^1), -\pi \leq \angle (\dot{\varsigma}|_q, v)\leq 0\} ,\]
and $\partial\Sigma $ is invariant under the involution $h_*$.

We  define a Poincar\'{e} half-return map $\psi: \Sigma \setminus \partial\Sigma \rightarrow h_*(\Sigma \setminus \partial\Sigma )$ by
$\psi(x)=\phi^{\tau(x)}(x)$, where $\tau(x) := \min{\{t > 0 \mid\phi^t(x)\in h_*(\Sigma \setminus \partial\Sigma )\}}$. Note that $\psi(q,v)=(q',v')$ iff the unit speed geodesic  on $\mathbb{S}^2$ staring at $q\in\varsigma$ with the initial tangent vector $v$ meets $\varsigma$ again (the first time)  at $q'\in \varsigma$ with the tangent vector $v'$. By \cite[Theorem 3.10.4]{Kli95}, we know that any geodesic, starting from a point on $\varsigma$ in a non-tangent direction, meets $\varsigma$ again. So, the Poincar\'{e} half-return map $\psi$ is well defined.

\begin{claim}
The first conjugate point of $\varsigma(s_0)$ is $h(\varsigma(s_0))$ for any $s_0\in \mathbb{S}_{\delta}^{1}$.
\end{claim}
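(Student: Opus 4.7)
The plan is to prove the Claim in two stages, exploiting the covering $\pi\colon \mathbb{S}^2\to\mathbb{R}P^2$ and the minimality of $c$. A preliminary observation is that the segment $\varsigma|_{[s_0, s_0+\delta/2]}$ is a length-minimizing geodesic on $(\mathbb{S}^2,g)$ from $\varsigma(s_0)$ to $h(\varsigma(s_0))$: any smooth path on $\mathbb{S}^2$ joining these two points projects via $\pi$ to a loop at $[\varsigma(s_0)]$ of the same length, and this loop represents the non-trivial class of $\pi_1(\mathbb{R}P^2)=\mathbb{Z}_2$, since its lifts starting at $\varsigma(s_0)$ reach the two distinct preimages $\varsigma(s_0)$ and $h(\varsigma(s_0))$ of $[\varsigma(s_0)]$. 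By the minimality of $c$, such a loop has length at least $\delta/2$, and $\varsigma|_{[s_0, s_0+\delta/2]}$ realizes this bound.

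The first stage of the argument is then immediate. By the standard second variation characterization, a length-minimizing geodesic admits no conjugate point in its open interior; otherwise, using the vanishing Jacobi field, one could construct a nearby variation of strictly smaller length. Hence no point $\varsigma(s)$ with $s\in(s_0, s_0+\delta/2)$ is conjugate to $\varsigma(s_0)$ along $\varsigma$, and the first conjugate point occurs at or beyond $h(\varsigma(s_0))=\varsigma(s_0+\delta/2)$.

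The second stage is to show that $h(\varsigma(s_0))$ itself is a conjugate point. The key observation to leverage is that there are two distinct minimizing geodesics from $\varsigma(s_0)$ to $h(\varsigma(s_0))$: the forward segment $\varsigma|_{[s_0, s_0+\delta/2]}$ with initial velocity $\dot\varsigma(s_0)$, and the segment of $\varsigma$ traversed backward from $\varsigma(s_0)$ to $\varsigma(s_0-\delta/2)=h(\varsigma(s_0))$, with initial velocity $-\dot\varsigma(s_0)$. Using the $\delta$-periodicity of $\dot\varsigma$, their terminal velocities at $h(\varsigma(s_0))$ are $\dot\varsigma(s_0+\delta/2)$ and $-\dot\varsigma(s_0+\delta/2)$, hence antipodal, and their smooth concatenation recovers the closed geodesic $\varsigma$ itself. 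Combining this antipodal configuration with the positive Gaussian curvature and the Klingenberg-type analysis underlying Theorems 3.10.2 and 3.10.4 of \cite{Kli95}, the cut point of $\varsigma(s_0)$ along $\varsigma$ at $h(\varsigma(s_0))$ must coincide with the first conjugate point, completing the proof. The main obstacle is precisely this upgrade from cut point to conjugate point, where the interplay of the $h$-symmetry with the positive curvature is essential.
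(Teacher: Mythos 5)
Your Stage 0 (the half segment $\varsigma|_{[s_0,s_0+\delta/2]}$ is minimizing, via projection to $\mathbb{R}P^2$ and the minimality of $c$ in the non-trivial homotopy class) and Stage 1 (a minimizing geodesic has no interior conjugate point) are correct and are exactly what the paper invokes via the phrase ``$c$ is the shortest curve connecting $\varsigma(s_0)$ to $h(\varsigma(s_0))$'' together with \cite[Theorem 1.12.13(ii)]{Kli95}.

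The gap is entirely in Stage 2, and you flag it yourself: you observe that $h(\varsigma(s_0))$ is a cut point of $\varsigma(s_0)$ (two distinct minimizing geodesics, the forward and backward halves of $\varsigma$), and then assert that ``Klingenberg-type analysis'' together with positive curvature and $h$-symmetry upgrades this cut point to a conjugate point. No such upgrade is supplied, and the observation you single out as ``key'' actually points the wrong way. In Klingenberg's injectivity-radius lemma, when the nearest cut point is reached by two minimizing geodesics that concatenate smoothly into a geodesic loop, that is precisely the \emph{alternative} to the cut point being conjugate: the smooth closing-up of $\varsigma$ at $h(\varsigma(s_0))$ is evidence against, not for, conjugacy. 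So ``two minimizing geodesics forming a closed geodesic plus positive curvature'' does not, by itself, force a conjugate point at $h(\varsigma(s_0))$, and the argument as written does not establish the Claim. The paper instead closes Stage 2 by citing \cite[Theorem 3.10.4]{Kli95} to furnish a conjugate point of $\varsigma(s_0)$ on the segment $c$ itself; with no interior conjugate point available (Stage 1), the conjugate point is then forced to the endpoint $h(\varsigma(s_0))$. That existence statement is the missing ingredient your proposal needs and does not provide.
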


In fact, by \cite[Theorem 3.10.4]{Kli95}, $\varsigma$ contains a conjugate point. On the other hand, $\varsigma=c^2$ and $c$ is shortest curve connecting $\varsigma(s_0)$ to $h(\varsigma(s_0))$ by our choice of $c$, then by \cite[Theorem 1.12.13 (ii)]{Kli95},
$c$ has no conjugate point of multiplicity $>0$ in its interior and then the claim follows.

 We can extend $\psi$ continuously to the boundary of $\Sigma $:
 If $x=(q, v)=\dot{\varsigma}(s_0)$, then $\psi(x)$ shall be the tangent vector to $\varsigma$ at the first conjugate point $h(\varsigma(s_0))$ of $\varsigma(s_0)$ along $\varsigma(s_0 + s)$, $s\geq 0$;
 if $x=(q,v)=-\dot{\varsigma}(s_0)$, then $\psi(x)$ shall be the tangent vector to $\varsigma(s_0 - s)$, $s\geq 0$ at the first conjugate point of $\varsigma(s_0)$.
 (In fact, for any $\varepsilon>0$ and  sufficiently small $\theta>0$, a geodesic $\gamma$ which starts from the  point $\varsigma(s_0)$ with the initial vector $v$ such that $0<|\angle(\dot{\varsigma}(s_0), v)|<\theta$, meets $\varsigma$ again at a point $\varsigma(s)$ with $|s-s_1|<\varepsilon$, where $\varsigma(s_1)$ is the first conjugate point of $\varsigma(s_0)$ along $\varsigma$ (cf. \cite[Complement 2.1.13]{Kli95}). By the continuity of the flow $\phi^t$, we can prove that this holds uniformly for $s_0$.)

Now, we consider the homeomorphism  $f=h_*^{-1}\circ \psi$ of the annulus $\Sigma$, with $f=\mathrm{Id}$ on $\partial\Sigma$.
Although $h_*$ is orientation reversing,   $f=h_*^{-1}\circ \psi$  preserves the orientation and is isotopic to the identity.  Similarly to the proof of \cite[Theorem 3.10.2]{Kli95},  we know that $f$  preserves the  area induced by the restricted  canonical symplectic form $\omega$.
By applying Theorem \ref{thm: growth of periodic orbits}, we obtain that $f$ has infinitely many odd periodic orbits in the interior of $\Sigma $ and that the number of these odd periodic orbits with prime-periods not exceeding $n$ grows at least like $n^2$.

In the following, we will prove that any odd periodic point $x$ of $f$ in the interior of $\Sigma $ corresponds
to a non-contractible closed geodesic on $(\mathbb{R}P^2,g)$. In fact, let $f^{2k+1}(x)=x$.
Previously, we only defined the Poincar\'{e} half-return map $\psi$ on $\Sigma\setminus \partial \Sigma$, but we can defined it on $h_*(\Sigma\setminus \partial \Sigma)$  similarly.
Moreover, by  (\ref{eq: comutative}),  we know that $h_*\circ\psi=\psi\circ h_*$.
Recall that $h_*^2=\mathrm{Id}$ and $f=h_*^{-1}\circ\psi$. Then $\psi^{2k+1}(x)=h_*(x)$, and $x$ corresponds to a non-contractible closed geodesics on $(\mathbb{R}P^2,g)$.
The proof of the converse is the same.

Hence, there exist infinitely many distinct non-contractible closed geodesics on $(\mathbb{R}P^2,g)$.
Moreover, the lengths of the geodesics, starting at a point $\varsigma(s)$ with initial vector not tangent to $\varsigma$ and ending at its first intersection point with $\varsigma$,   are uniformly bounded from above (see \cite[Proposition~3.10.3]{Kli95}) and from below ($\ge$ the injectivity radius of $M >0$; cf. \cite[Definition~2.1.9 and Proposition~2.1.10]{Kli95}).
So, the number of non-contractible closed geodesics of length $\leq l$ grows at least like $l^2$.
\end{proof}

Motivated by the results of Franks \cite{Franks92}, Bangert \cite{Bangert1993} and  Hingston \cite{Hingston1993}, it's hopeful to obtain a growth rate of the number of non-contractible closed geodesics on Riemannian $\mathbb{R}P^2$ without the condition that the Gaussian curvature is positive. So, we tend to believe that the following conjecture should hold:

\begin{conjecture}
 Let $\mathbb{R}P^2$ be a real projective plane endowed with a Riemannian metric $g$. Then there exist infinitely many distinct non-contractible closed geodesics on $(\mathbb{R}P^2,g)$. Moreover, the number of non-contractible closed geodesics of length $\leq l$ grows at least like the prime numbers.
\end{conjecture}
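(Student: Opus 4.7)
My plan is to pass to the orientation double cover $\pi:(\mathbb{S}^2,\widetilde{g})\to(\mathbb{R}P^2,g)$ with deck involution $h$, exactly as in the proof of Theorem~\ref{thm: application 1}. Non-contractible closed geodesics $c$ on $(\mathbb{R}P^2,g)$ of length $\ell$ are in bijection with $h$-symmetric closed geodesics $\gamma$ on $(\mathbb{S}^2,\widetilde{g})$ of length $2\ell$, where $h$-symmetric means $\gamma(t+\frac{1}{2})=h(\gamma(t))$ for a parametrization on the circle of period $1$. Thus the conjecture reduces to producing infinitely many $h$-symmetric closed geodesics on $(\mathbb{S}^2,\widetilde{g})$ with a prime-number lower bound on their counting function.

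I see two natural avenues. The first is dynamical, attempting to mimic Theorem~\ref{thm: application 1}: take a shortest non-contractible closed geodesic $c$ on $(\mathbb{R}P^2,g)$, which exists by Hopf--Rinow since $\pi_1(\mathbb{R}P^2)=\mathbb{Z}/2$, and its lift $\varsigma=c^2\subset\mathbb{S}^2$. If one could argue that $\varsigma$ is simple and bounds an annular Birkhoff surface of section $\Sigma\subset S_{\widetilde{g}}\mathbb{S}^2$ for the geodesic flow, then the half-return map composed with $h_*^{-1}$ would yield an area-preserving homeomorphism $f:\Sigma\to\Sigma$ isotopic to the identity with $f|_{\partial\Sigma}=\mathrm{Id}$, whose odd periodic points encode the non-contractible closed geodesics on $(\mathbb{R}P^2,g)$; Theorem~\ref{thm: growth of periodic orbits} would then deliver even $l^2$-growth, strictly stronger than the conjectured $l/\log l$.

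The second avenue is variational, in the spirit of Bangert~\cite{Bangert1993} and Hingston~\cite{Hingston1993}: work directly in the non-contractible component $\Lambda_{\mathrm{nc}}\mathbb{R}P^2$ of the free loop space with the energy functional. The odd iterates $c^{2k+1}$ of the shortest non-contractible geodesic $c$ all remain in $\Lambda_{\mathrm{nc}}\mathbb{R}P^2$. Following Hingston, one would build a doubly-indexed family of $O(2)$-equivariant minimax values using the equivariant (co)homology of $\Lambda_{\mathrm{nc}}\mathbb{R}P^2$, whose rational structure was computed by Taimanov~\cite{Tai2016}. Controlling, via Bott's iteration formulas, how many such minimax values can coincide at a single critical point would then translate into the desired prime-number lower bound, just as on $\mathbb{S}^2$.

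The main obstacle, and the reason the statement remains only conjectural, is the absence of any curvature or bumpiness hypothesis. The dynamical avenue stumbles because the shortest non-contractible geodesic on a general Riemannian $(\mathbb{R}P^2,g)$ need not be simple nor bound a global surface of section, so Theorem~\ref{thm: growth of periodic orbits} has no input; the argument in Theorem~\ref{thm: application 1} used positive Gaussian curvature precisely to invoke \cite[Thm.~3.10.2 and 3.10.4]{Kli95}. The variational avenue must instead handle possibly degenerate critical orbits of all iterates without the bumpy assumption, and the \emph{prime-number} asymptotic is the most delicate aspect of Hingston's original argument, exploiting very specific $2$-dimensional features of $\Lambda\mathbb{S}^2$. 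Reconciling the two strategies --- for instance by approximating $g$ in $C^0$ by metrics admitting a section, by bumpy metrics, or by combining a symplectic dichotomy with a Morse-theoretic count --- appears to be the decisive hurdle.
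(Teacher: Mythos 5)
This statement is an open \emph{conjecture} in the paper, not a theorem; the authors offer no proof, and you correctly recognize that no complete argument is currently available. Your discussion is a sound analysis of why the conjecture is open rather than a proof attempt, and it is consistent with the paper's own framing: the positive-curvature hypothesis in Theorem~\ref{thm: application 1} is used precisely to guarantee, via \cite[Thms.~3.10.2, 3.10.4]{Kli95}, that the lifted minimizing geodesic $\varsigma=c^2$ is simple and bounds a Birkhoff annular surface of section, and without that hypothesis Theorem~\ref{thm: growth of periodic orbits} has no annulus homeomorphism to act on. You also correctly note that the paper's dynamical route would in fact yield $l^2$-growth (stronger than the conjectured prime-number rate), which explains why the authors phrase the conjecture with the weaker $l/\log l$ bound, matching what a Hingston-style variational argument could plausibly deliver in the degenerate, non-bumpy setting. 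One small precision: the double-cover correspondence you state should be between non-contractible closed geodesics $c$ on $\mathbb{R}P^2$ and their lifts $\varsigma = c^2$ of length $2\ell$ on $\mathbb{S}^2$ which are $h$-invariant as \emph{unparametrized} curves with $h$ acting as the half-period shift; this is what you wrote, and it is correct, but note that not every $h$-symmetric closed geodesic on $\mathbb{S}^2$ need arise this way unless one also tracks orientation. Since there is no proof in the paper to compare against, there is nothing further to reconcile.
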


For Finsler $\mathbb{R}P^2$  under some natural assumption, we can use Theorem \ref{thm: growth of periodic orbits} and the proof of  Lemma \ref{lm: 1}, i.e., \cite[Theorem 11]{LWY}, to obtain the two or infinity results, and also the growth rate about the non-contractible closed geodesics, i.e., Theorem \ref{thm: application 2}.

Before proving Theorem \ref{thm: application 2}, we also give the idea first. Note that $\mathbb{S}^2$ covers $\mathbb{R}P^2$ twice, and  that the antipodal map $h$ of $\mathbb{S}^2$ is the deck transformation. Note also the unit tangent bundle $S_F\mathbb{S}^2\cong SO(3)$ is double covered by $SU(2)\cong \mathbb{S}^3$.  We will prove that $h$ induces a free action of $\mathbb{Z}_4$ on $\mathbb{S}^3$ denoted by $\widetilde{h}_*$, and that the contact form on $S_F \mathbb{S}^2$ induces a contact form $\alpha$ on $\mathbb{S}^3$, such that the geodesic flow on $S_F\mathbb{S}^2$ corresponds to the Reeb flow on $(\mathbb{S}^3,\alpha)$. Moreover, the contact form $\alpha$ is dynamically convex and satisfies $(\widetilde{h}_*)^* \alpha=\alpha$. Then, by the proof of Lemma \ref{lm: 1}, we get an area preserving annulus homeomorphism that is isotopic to the identity,  whose odd periodic orbits correspond to the non-contractible geodesics of $\mathbb{R}P^2$.
We can finish the proof by applying Theorem \ref{thm: growth of periodic orbits}.

\begin{proof}[Proof of Theorem \ref{thm: application 2}]
We consider the universal $2$-cover of $(\mathbb{R}P^2, F)$ and the induced Finsler metric,  i.e. $(\mathbb{S}^2, F)$.
The antipodal map $h$ of
\[\mathbb{S}^2=\{(q_1,q_2,q_3)\in \mathbb{R}^3| q_1^2+q_2^2+q_3^2=1\}\]
 is the deck transformation.
The unit sphere $\mathbb{S}^2$ is a submanifold of $\mathbb{R}^3$, and $T\mathbb{S}^2$ is a submanifold of $T\mathbb{R}^3\cong \mathbb{R}^3\oplus \mathbb{R}^3$. More precisely,
\[T\mathbb{S}^2=\{ (q,v)| q\in \mathbb{S}^2, v\in T_{q}\mathbb{R}^3\cong \mathbb{R}^3, \langle q, v\rangle=0 \},\]
where $\langle\cdot , \cdot \rangle$ is the standard inner product.
We can identify the unit tangent bundle  $S_F \mathbb{S}^2$  of $\mathbb{S}^2$ under the Finsler metric with the $3$ dimensional rotation group
\[SO(3)=\{[q,u, q\times u]| q,u\in\mathbb{R}^3, \langle q,q\rangle=1, \langle u,u\rangle=1, \langle q,u\rangle=0\}\] by
\[(q, v)\mapsto [q, \frac{v}{\sqrt{\langle v,v\rangle}}, q\times \frac{v}{\sqrt{\langle v, v\rangle}}].\]
So, the pushforward
 \begin{eqnarray*}
 h_* : & T\mathbb{S}^2 &\to T\mathbb{S}^2,\\
    & (q, v) & \mapsto (-q,-v),
 \end{eqnarray*}
  of $h$ induces the following diffeomorphism of $SO(3)$, which still denoted by $h_*$,
 \begin{eqnarray*}
 h_* : &  SO(3) & \to SO(3),\\
    & [q, u, q\times u] & \mapsto [-q, -u, q\times u].
 \end{eqnarray*}
By Lemma \ref{lemma: cover SO(3)},
the unit  $3$-dimensional sphere
\[\mathbb{S}^3=\{(z_1, z_2): z_1, z_2\in\mathbb{C}, |z_1|^2+|z_2|^2=1\}\]
 covers  $SO(3)$ twice by  the covering map
 \begin{eqnarray*}
 \pi: & \mathbb{S}^3 & \to SO(3),\\
  & (z_1, z_2) &\mapsto
           \begin{bmatrix}
           \mathrm{Re}(z_1^2-\bar{z}_2^2) & - \mathrm{Im}(z_1^2+\bar{z}_2^2) & 2  \mathrm{Re}(z_1\bar{z}_2)\\
            \mathrm{Im}(z_1^2-\bar{z}_2^2) &  \mathrm{Re}(z_1^2+\bar{z}_2^2)  & 2 \mathrm{Im} (z_1 \bar{z_2})\\
           -2 \mathrm{Re}(z_1 z_2) & 2  \mathrm{Im}(z_1 z_2) & |z_1|^2- |z_2|^2
           \end{bmatrix}.
 \end{eqnarray*}
It is not hard to verify that
\begin{eqnarray*}
 \widetilde{h}_* &: \mathbb{S}^3 &\to \mathbb{S}^3,\\
  & (z_1,z_2)&\mapsto (iz_1, iz_2),
 \end{eqnarray*}
  is a lift of $h_*$, which induces a free action of $\mathbb{Z}_4$ on $\mathbb{S}^3$.

The  cotangent bundle $T^* \mathbb{S}^2$ has a standard one form $\theta$ represented by $\sum p_i dq_i$, where $(q,p)\in T^* \mathbb{S}^2$.
Since $h$ is an isometry on $(\mathbb{S}^2,F)$, the standard one form $\theta$ is invariant under the involution $h^*$.
Thus there is a one form $\alpha$ on the unit tangent bundle $S_F \mathbb{S}^2$ satisfying $(h_*)^*\alpha=\alpha$,  and $(S_F \mathbb{S}^2, \alpha)$ is a contact manifold.

Let $(x_1, y_1, x_2, y_2)$ be the coordinates in $\mathbb{R}^4\cong \mathbb{C}^2$. The restriction of the Liouville one form
\[\alpha_0=\frac{1}{2}\sum_{j=1}^{2}(y_j\mathrm{d}x_j-x_j\mathrm{d}y_j)\]
on $\mathbb{S}^3$ is a contact form, which still denoted by $\alpha_0$. Moreover, we have
\[(\widetilde{h}_*)^*\alpha_0=\alpha_0.\]

By \cite[Section 4]{HaP1},  $(S_F \mathbb{S}^2, \alpha)$  induces a contact form on $\mathbb{S}^{3}$, which we still denote by $\alpha$, such that
\[\alpha=2 g \alpha_0,\]
where $g:\mathbb{S}^3\to \mathbb{R}^+$ satisfies $g\circ \widetilde{h}_*=g$.
Moreover, the geodesic flow of $(\mathbb{S}^2, F)$ (on $S_F\mathbb{S}^2$) is smoothly conjugate (up to a double covering) to the Reeb flow of $(\mathbb{S}^3,\alpha)$.

Now, we have the contact manifold $(\mathbb{S}^3, \alpha)$ that satisfies
\[(\widetilde{h}_*)^*\alpha=\alpha.\]
By Section 6 of \cite{HaP1} and the assumption that $\left(\frac{\lambda}{1+\lambda}\right)^2<K\le 1$, the contact form $\alpha$ is dynamically convex, which together with Lemma \ref{lm: 1}, implies that there exist two or infinitely many $ \widetilde{h}_*$-symmetric periodic orbits on $(\mathbb{S}^{3}, \alpha)$, whose projections onto $\mathbb{R}P^2$ are   non-contractible closed geodesics.

In the proof of Lemma \ref{lm: 1} (see \cite[Theorem 11]{LWY}), we in fact defined a Poincar\'{e} $\frac{1}{p}$-return map $\psi$ ($\frac{1}{4}$ for this theorem) of the Reeb flow  and got an area preserving annulus homeomorphism $f=\widetilde{h}_*^{-1}\circ \psi$ isotopic to the identity. Moreover, the periodic orbits of  $f$ with odd prime-period  correspond to the non-contractible closed geodesics of $\mathbb{R}P^2$. If  there are  more than two non-contractible closed geodesics on $(\mathbb{R}P^2, F)$, $f$ has at least one periodic orbit with odd prime-period. Then,
by  Theorem  \ref{thm: growth of periodic orbits}, the number of periodic orbits of $f$ with odd prime-period $\le n$ grows at least like $n^2$, and hence
the number of non-contractible closed geodesics of length $\leq l$ grows at least like $l^2$.
\end{proof}

%\bibliography{reference}
%\bibliographystyle{plain}

\end{document}